\theoremstyle{plain}
    \newtheorem{thm}{Theorem}
    \newtheorem{theorem}[thm]{Theorem}
    \newtheorem{lemma}[thm]{Lemma}
    \newtheorem{corollary}[thm]{Corollary}
\theoremstyle{definition}
    \newtheorem{question}[thm]{Question}
    \newtheorem{definition}[thm]{Definition}
    \newtheorem{example}[thm]{Example}
\newcommand{\N}{{\mathbb N}}
\newcommand{\Z}{{\mathbb Z}}
\DeclareMathOperator{\id}{id}
\DeclareMathOperator{\Pol}{Pol}
\DeclareMathOperator{\Sym}{Sym}
\DeclareMathOperator{\rad}{rad}
\DeclareMathOperator{\CSP}{CSP}
\DeclareMathOperator{\arity}{ar}
\DeclareMathOperator{\Sg}{Sg}
\DeclareMathOperator{\Clo}{\mathsf{Clo}}
\newcommand{\alg}[1]{\mathbf{#1}}
\newcommand{\algA}{\alg{A}}
\newcommand{\algB}{\alg{B}}
\newcommand{\algG}{\alg{G}}
\DeclareMathOperator{\comP}{\mathsf{P}}
\DeclareMathOperator{\comEXPTIME}{\mathsf{EXPTIME}}
\DeclareMathOperator{\Decide}{\textsc{Decide}}
\theoremstyle{oupproof}
\newtheorem{proof}{Proof}
\newcommand\comment[1]{}
\begin{document}
\begin{Frontmatter} 
\author{Alexandr Kazda}
\author{Michael Kompatscher}
\authormark{A. Kazda and M. Kompatscher}

\address{\orgdiv{Department of Algebra, Faculty of Mathematics and Physics}, \orgname{Charles University}, \orgaddress{\street{Sokolovská 49/83}, \postcode{186 75} \city{Praha 8}, \country{Czech Republic}}\email{alex.kazda@gmail.com}\\ORCID 0000-0002-7338-037X}

\address{\orgdiv{Department of Algebra, Faculty of Mathematics and Physics}, \orgname{Charles University}, \orgaddress{\street{Sokolovská 49/83}, \postcode{186 75} \city{Praha 8}, \country{Czech Republic}}\email{michael@logic.at}}

\title[The local-global property for $G$-invariant terms]{The local-global property for $G$-invariant terms\thanks{This project was supported by grant No 18-20123S of the Czech Science Foundation (GA\v{C}R), the Charles University
   Research Center programs No.UNCE/SCI/022 and PRIMUS/21/SCI/014, and INTER-EXCELLENCE project LTAUSA19070 of the Czech Ministry of Education M\v{S}MT. Michael Kompatscher has further received funding from the European Research Council (ERC) under the European Union's Horizon 2020 research and innovation programme (grant agreement No 714532). The paper reflects only the authors' views and not the views of the ERC or the European Commission. The European Union is not liable for any use that may be made of the information contained therein.}}

\keywords{Maltsev condition, permutation group, local-global property, oligomorphic algebra}
\keywords[AMS subject classification]{03C05, 08A70, 20B05}

\abstract{For some Maltsev conditions $\Sigma$ it is enough to check if a finite algebra $\algA$ satisfies $\Sigma$ \emph{locally} on subsets of bounded size in order to decide whether $\algA$ satisfies $\Sigma$ (\emph{globally}). This \emph{local-global property} is the main known source of tractability results for deciding Maltsev conditions. \\
In this paper we investigate the local-global property for the existence of a \emph{$G$-term}, i.e. an $n$-ary term that is invariant under permuting its variables according to a permutation group $G \leq \Sym(n)$. Our results imply in particular that all cyclic loop conditions (in the sense of Bodirsky, Starke, and Vucaj) have the local-global property (and thus can be decided in polynomial time), while symmetric terms of arity $n>2$ fail to have it.}
\end{Frontmatter}

\section{Introduction}
Maltsev conditions play a central role in universal algebra. In several classical results they were shown to characterise varieties of algebras with well-behaved congruence lattices \cite{HobbyMcKenzie}; more recently, they turned out to be an indispensable tool in the study of constraint satisfaction problems. By \cite{wonderland} the Maltsev conditions of height 1 that hold in the polymorphism algebra of a finite  structure $\mathbb A$ completely determine the complexity of $\CSP(\mathbb A)$.

Therefore it is a natural computational problem to decide whether a given algebra $\algA$ satisfies a fixed Maltsev condition $\Sigma$. The systematic study of the complexity of this problem was initiated by Freese and Valeriote in \cite{FV-decidingMC}. Unfortunately it is often a computationally hard problem, even for strong Maltsev conditions. It is for instance $\comEXPTIME$-complete to decide, whether a finite algebra has a semilattice operation, J\'onsson terms of fixed rank $n>2$, or a weak near unanimity operation of fixed arity \cite{FV-decidingMC, horowitz-decidingmaltsev}. The situation appears to be better if we restrict the input to finite \emph{idempotent} algebras; then there are several Maltsev conditions, which can be decided in polynomial time.

One of the main strategies in obtaining polynomial time algorithms is to check whether $\Sigma$ is satisfied locally in $\algA$ on subsets of bounded size. For several strong linear Maltsev conditions~$\Sigma$ this implies that $\algA$ satisfies $\Sigma$ on its whole universe. This \emph{local-global property} was used to prove the tractability of deciding whether a finite idempotent algebra has a Maltsev term \cite{FV-decidingMC}, a NU-term of fixed arity \cite{horowitz-decidingmaltsev}, J\'onsson terms and Gumm terms of fixed degree \cite{KV-pathconditions}, and $n$-Hagemann-Mitschke terms \cite{VW-decidingHM}. A recent result of the first author shows that 4-ary Siggers terms also have the local-global property. This is important because a finite algebra has a Siggers term if and only if it satisifes some non-trivial Maltsev condition of height~1  \cite{kazda-decidingQWNU}. In \cite{FV-decidingMC} and \cite{DFV-decidingdifferenceterms} it was shown that also some non-strong Maltsev conditions can be decided in polynomial time, by studying local conditions (i.e. properties of subalgebras with bounded generating sets). The only significant deviation from the local-global principle (in this broader sense) known to us is the proof that minority terms can be decided in NP \cite{KOVZ-minority} which uses a variation of Mayr's algorithm for the subpower membership problem in Maltsev algebras \cite{mayr-SMP}.

In this paper we study the local-global property for the height 1 Maltsev conditions $\Sigma_G$ that state the existence of a \emph{$G$-term}, i.e. a $n$-ary term $t(x_1,\ldots,x_n)$ that is invariant under permuting its variables according to the permutation group $G \leq \Sym(n)$. The idea to study Maltsev conditions parameterized by groups was suggested to us by Matt Valeriote. While for some groups $G$ the condition $\Sigma_G$ is quite familiar, we are not aware of any previous paper studying Maltsev conditions arising from the perspective of permutation groups. 

Such conditions $\Sigma_G$ are of interest because they connect universal algebra to the theory of permutation groups. Moreover, the conditions $\Sigma_G$ encompass many conditions relevant in the study of CSPs and PCSPs, such as the existence of cyclic terms, symmetric terms, and block-symmetric terms of fixed arity.

We show that $\Sigma_{G}$ has the local-global property whenever $G$ is a direct products of regular permutation groups. In particular this implies that we can decide in polynomial time whether a finite algebra has a \emph{cyclic term} of fixed arity, or, more generally, satisfies a fixed \emph{cyclic loop condition} (introduced in \cite{cyclic-loop-conditions}).

However, we also show that $\Sigma_{G}$ fails to have the local-global property (even for idempotent algebras) if $G$ has no fixpoint, but contains a permutation which fixes exactly one point, and has equisized orbits otherwise. This implies in particular that symmetric terms of arity $n > 2$ do not have the local-global property.  The only previously known strong Maltsev condition to not have the local-global property are minority terms \cite{KOVZ-minority}. 

We remark however that the failure of the local-global property does not imply anything about the complexity of deciding the existence of $n$-ary symmetric terms which still might be in $\comP$.

Additionally, we give a new application of the local-global property outside the realm of finite algebras: we show that whenever $\Sigma_G$ has the local-global property for finite algebras, then the local satisfaction of $\Sigma_G$ also lifts to global satisfaction in closed oligomorphic clones. Oligomorphic clones are clones on countably infinite sets that satisfy a certain compactness condition; they are essential to the study of CSPs of infinite structures (see \cite{Bodirsky-book} for background). 

Our paper is structured as follows: In Section \ref{sect:background} we formally define the problem of deciding Maltsev conditions and the local-global property. In Section \ref{sect:Gterms} we introduce $G$-terms, and make some observation on how they compare to each other within the interpretability lattice. In Section \ref{sect:example} we prove that $\Sigma_G$ has the local-global property for direct products of regular permutation groups while Section \ref{sect:failure} shows a failure of the local-global property for some other groups $G$. In Section \ref{sect:oligomorphic} we discuss the local-global property for oligomorphic clones. We conclude with Section \ref{sect:openproblems} which contains some open problems.

\section{Background} \label{sect:background}

\subsection{Preliminaries}
An algebra is a structure $\algA = (A,(f_i)_{i \in I})$ consisting of a non-empty set $A$, called the \emph{universe} of $\algA$, and a list of finitary operations $f_i \colon A^{k_i} \to A$, called the \emph{basic operations} of $\algA$.
If $\algA=(A,(f_i)_{i \in I})$ is an algebra, then $B\subset A$ is a \emph{subuniverse} of $B$ if $B$ is closed under $f_i$ for all $i\in I$. If $r_1,\dots,r_n\in A$ then the \emph{subuniverse of $\algA$ generated by $r_1,\dots,r_n$} is the smallest subuniverse of $\algA$ that contains $r_1,\dots,r_n$. We will denote this subuniverse by $\Sg_\algA(r_1,\dots,r_n)$. If $B$ is a nonempty subuniverse then the algebra $\algB$ that we get from $\algA$ by restricting all operations to $B$ is a \emph{subalgebra} of $\algA$.

An algebra is \emph{idempotent} if the identity $f_i(x,x,\ldots,x) = x$ holds for all basic operations of $\algA$ and all $x\in A$. In this paper we call an algebra \emph{finite}, if both the universe $A$ and the list of basic operations are finite. By $\arity(\algA)$ we denote the maximal arity of a basic operation of $\algA$.

An operation on $A$ is a \emph{term operation} of $\algA$ if it can be expressed as a composition of basic operations of $\algA$ and projections $p_i^n(x_1,\ldots,x_n) = x_i$. We write $\Clo(\algA)$ for the set (clone) of all term operations of $\algA$. Let $[n] = \{1,2,\ldots,n\}$. For any $n$-ary operation $t \colon A^n \to A$ and a map $\alpha \colon [n] \to [m]$ we define the \emph{minor of $t$ with respect to $\alpha$} as the operation $t^\alpha$ such that $t^{\alpha}(x_1,\ldots,x_m) = t(x_{\alpha(1)}, x_{\alpha(2)}, \ldots, x_{\alpha(n)})$ for all $x_1,\dots,x_m\in A$. It is an easy exercise to show that if $t\in \Clo(\algA)$, then $t^\alpha\in \Clo(\algA)$.

If $\alpha\colon [n] \to [m]$ and $\beta\colon [m]\to [k]$, then we have
$t^{\beta\circ\alpha}=(t^\alpha)^\beta$ (note the change of order). In particular
\[
t^{\beta\circ\alpha}(x_1,\dots,x_k)=(t^\alpha)(x_{\beta(1)},\dots,x_{\beta(m)}).
\]
If $\overline a$ is an $n$-tuple and $\sigma\colon[m]\to [n]$ a mapping, we denote by $\overline a^\sigma$ the $m$-tuple $(a_{\sigma(1)},a_{\sigma(2)},\dots,a_{\sigma(m)})$. With this notation, we get that $t^\sigma(\overline a)=t(\overline a^\sigma)$ for any $n$-ary operation $t$ and any $\overline a\in A^m$. In contrast to the composition order of minor-taking, we have $(\overline a^\alpha)^\beta=\overline a^{\alpha\circ\beta}$ for tuples. For future reference, note that for any $\overline a\in A^n$ and any permutations $\alpha,\beta\in S(n)$ we have the following set of identities
\[
t^{\alpha\beta}(\overline a)=
(t^\beta)^\alpha(\overline a)=t^\beta(\overline a^\alpha)=
t(\overline a^{\alpha\beta}).
\]

Whenever it is convenient (and does not risk confusion), we are alternatively going to label the variables of a term $t$ by finite index sets $I$ other than subsets of natural numbers, that is $t((x_i)_{i\in I})$.

We say an operation $f \colon A^n \to A$ \emph{preserves} a relation $R \subseteq A^m$, if $\overline r_1,\ldots,\overline r_n \in R$ implies $f(\overline r_1, \ldots, \overline r_n) \in R$; here the entries of $f(\overline r_1, \ldots , \overline r_k)$ are computed component-wise. The relation $R \subseteq A^m$ is \emph{invariant under $\algA$} if it is preserved by all operations of $\algA$.

When applying terms to tuples of tuples, it can be convenient to think of the tuples $\overline r_1, \ldots, \overline r_n$ as the columns of a matrix $M \in A^{m \times n}$. We will then use the notation $f(M) = f(\overline r_1, \ldots , \overline r_k)$; note that the $i$-th element of $f(M)$ is $f$ applied to the $i$-th row of $M$. It is not hard to see that for a fixed matrix $M = (\overline r_1, \ldots, \overline r_n) \in A^{m \times n}$ the set $\Sg_{\algA^m}(M) = \{t(M) \colon t \in \Clo(\algA) \} \subseteq A^m$ is the subuniverse generated by $\overline r_1, \ldots, \overline r_n$ in $\algA^m$. In Section~\ref{sect:example}, it will be convenient to index the rows or columns of a matrix $M\in A^{X\times Y}$ by other finite sets $X,Y$ than the natural numbers.

For a general background in universal algebra we refer to \cite{bergman-universal-algebra} or \cite{SB-universal-algebra}.

\subsection{Deciding Maltsev conditions}

An \emph{equation} (or identity) is a formal statement ``$t_1 \approx t_2$'' where $t_1$ and $t_2$ are terms constructed from some function and variable symbols. The $\approx$ symbol signifies that the equation should hold for all values of the variables. If $\algA$ is an algebra, $t_1$, $t_2$ are terms composed from the basic operations of $\algA$ and $x_1,x_2,\dots,x_n$ is the list of all variables occurring in $t_1$ and $t_2$, then we say that the equation $t_1\approx t_2$ \emph{holds} in $\algA$ if the sentence $\forall x_1\,\forall x_2\,\dots\forall x_n,\, t_1=t_2$ is satisfied in $\algA$.

For the purposes of this paper, a \emph{strong Maltsev condition} $\Sigma$ is a finite set of equations involving a finite set of variables $\{x_1,\ldots,x_n\}$ and a finite set of function symbols $\{d_1,\ldots,d_m\}$.  We are only going to study strong Maltsev conditions $\Sigma$ in this paper. (See \cite{bergman-universal-algebra} for the definition of general Maltsev conditions.)

An algebra \emph{$\algA$ satisfies $\Sigma$} if for every symbol $d_i$ in $\Sigma$ there is a term operation $d_i^{\algA} \in \Clo(\algA)$ of the same arity, such that each equation in $\Sigma$ holds in $\algA$ for the operations $d_1^{\algA},\ldots,d_m^{\algA}$. We then write $\algA \models \Sigma$ for short.

We say that \emph{$\algA$ satisfies $\Sigma$ on a set $F \subseteq A^n$}, if there are terms $d_1^\algA,\ldots,d_m^\algA \in \Clo(\algA)$, such that the sentence
$\forall (x_1,\ldots,x_n)\in F,\, t_1\approx t_2$ is satisfied in $\algA$ for each equation $t_1\approx t_2$ in $\Sigma$.

\begin{example}
$\Sigma= \{p(p(x_1, x_2), r(x_1)) \approx x_2 \}$ is a strong Maltsev condition involving the variables $x_1,x_2$, a binary operation symbol $p$ and a unary operation symbol $r$. Any Abelian group $\algG = (G, + , 0, -)$ satisfies $\Sigma$, as witnessed by the term operations $p^{\algG}(x_1,x_2) = x_1 + x_2$ and $r^{\algG}(x_1) = -x_1$.\\ The 2-element semilattice $\algA = (\{0,1\},\land)$ does not satisfy $\Sigma$, but it satisfies $\Sigma$ on the set $F= \{(0,0), (1,0)\}$, as witnessed by $p^{\algA}(x_1, x_2) = x_1 \land x_2$ and $r^{\algA}(x_1) = x_1$.
\end{example}

The interpretability quasiorder on Maltsev conditions is defined by $\Sigma_1 \leq \Sigma_2$, if for every algebra $\algA$ we have $\algA \models \Sigma_2 \Rightarrow \algA \models \Sigma_1$. We say that two Maltsev conditions $\Sigma_1, \Sigma_2$ are \emph{equivalent} if $\Sigma_1 \leq \Sigma_2$ and $\Sigma_2 \leq \Sigma_1$. Modulo this equivalence relation, the interpretability quasiorder forms a complete lattice.

A Maltsev condition $\Sigma$ is called \emph{trivial} if it is satisfied in every algebra (and thus minimal with respect to the interpretability order). A Maltsev condition is called of \emph{height 1} if its equations only involve minors of function symbols, and it is called \emph{linear} if its equations only involve minors of the functions symbols and variables.  For example, the Maltsev condition $p(y, x, x) \approx p(x,y,x) \approx p(x,x,y) \approx y$ is linear, but not of height 1. \\

For a Maltsev condition $\Sigma$ we define $\Decide(\Sigma)$ as the following decision problem:\\

\noindent \fbox{\parbox{\linewidth}{
$\Decide(\Sigma)$\\
Input: A finite algebra $\algA = (A, f_1,\ldots,f_n)$\\
Question: Does $\algA \models \Sigma$? }}\\

Here the input is given by the operation tables of $f_1,\ldots,f_n$. Thus its size can be measured by $\| \algA \|= \sum_{i = 1}^n |A|^{k_i}$, where $k_i$ is the arity of $f_i$. If we restrict the input to idempotent algebras we obtain the problem:\\

\noindent \fbox{\parbox{\linewidth}{ $\Decide^{id}(\Sigma)$\\
Input: A finite idempotent algebra $\algA = (A, f_1,\ldots,f_n)$\\
Question: Does $\algA \models \Sigma$? }}\\

We are only going to study strong Maltsev conditions $\Sigma$ in this paper. Then both $\Decide(\Sigma)$ and $\Decide^{id}(\Sigma)$ are decidable in $\comEXPTIME$; this follows directly from the fact that all operation in $\Clo(\algA)$ of arity bounded by some $m$ can be computed in time $C \arity(\algA) |A|^{|A|^m}$.

The idempotent problem $\Decide^{id}(\Sigma)$ trivially reduces to $\Decide(\Sigma)$, however in general the complexity of $\Decide(\Sigma)$ can be harder than $\Decide^{id}(\Sigma)$, even for linear Maltsev conditions \cite{FV-decidingMC, horowitz-decidingmaltsev}.

We would also like to point out that $\Decide(\Sigma)$ is different from the `Meta-Problem for CSPs', in which the input is given by a \emph{relational structure $\mathbb A$} instead of an algebra, and the task is to decide, whether the polymorphism clone $\Pol(\mathbb A)$ satisfies $\Sigma$ or not. A survey on the Meta-Problem for CSPs can be found in \cite{CL-metaproblem}.

\subsection{The local-global property}
We next formally define the local-global property and show how it can be used to prove the tractability of deciding linear Maltsev conditions.

\begin{definition} \label{definition:local-global} \
\begin{enumerate}
\item For $k\geq 1$, we say that a strong Maltsev condition $\Sigma$ with variable set $\{x_1,\dots,x_n\}$ has the \emph{$k$-local-global property} if any algebra $\algA$ such that $|A|^n\geq k$  and $\algA$ satisfies $\Sigma$ on all $k$ element sets $F \subseteq A^n$ satisfies $\Sigma$ on \emph{every} finite subset of $A^n$. In particular this implies $\algA \models \Sigma$ if $\algA$ is finite. 
\item We say that $\Sigma$ has the \emph{local-global property} if there is a $k$, such that $\Sigma$ has the $k$-local-global property.
\item We say that $\Sigma$ has the ($k$-)local-global property for a class of algebras $\mathcal K$ if the above is true for all algebras $\algA \in \mathcal K$.
\end{enumerate}
\end{definition}

We remark that our definition of the ($k$-)local-global property is consistent with several preceding definitions, such as the `local-global property of size $k$ for special cube terms', given in \cite{horowitz-decidingmaltsev}. In the special case of linear Maltsev conditions $\Sigma$, the local-global property gives rise to a polynomial time algorithm for $\Decide(\Sigma)$, by the following lemma:

\begin{lemma} \label{lemma:local-global}
Let $\Sigma$ be a strong linear Maltsev condition that has the $k$-local-global property. Let $n$ be the number of variables, and let $m$ be the number of different minors appearing in $\Sigma$. Then $\Decide(\Sigma)$ can be decided in time $\mathcal O (\arity(\algA) \|\algA\|^{k \cdot (m+n)})$.\\
The analogue statement holds for $\Decide^{id}(\Sigma)$ if $\Sigma$ has the $k$-local-global property for idempotent algebras.
\end{lemma}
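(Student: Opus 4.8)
The plan is to brute-force the local check over all $k$-element subsets $F \subseteq A^n$, and for each such $F$ search over all tuples of candidate witness term operations of the arities demanded by $\Sigma$. For the algorithm to run in the claimed time, I need to bound the number of relevant $F$'s and the cost of verifying that $\algA$ satisfies $\Sigma$ on a fixed $F$. The key observation — already implicit in the discussion after the definition of $\Sg_{\algA^m}$ — is that a linear Maltsev condition restricted to a fixed finite set $F$ only constrains the term operations through their values on $F$, so we never need to manipulate the (possibly enormous) full operation tables of candidate terms.

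First I would fix an enumeration of all subsets $F \subseteq A^n$ with $|F| = k$; there are $O(|A|^{nk})$ of them, which is $O(\|\algA\|^{nk})$ since $|A| \le \|\algA\|$. For a fixed $F = \{\overline a_1, \dots, \overline a_k\}$, form for each minor symbol $d$ appearing in $\Sigma$ (of arity, say, $r$) the $k \times r$ matrix $M_d$ whose rows are the appropriate reindexings $\overline a_j^{\alpha}$ of the tuples in $F$ dictated by how $d$'s minor is applied in the equations of $\Sigma$; more simply, one forms the $k \times n$ matrix $M$ with rows $\overline a_1,\dots,\overline a_k$ and observes that the value on $F$ of any minor $d^\alpha$ of an $r$-ary term $d$ is determined by the value of $d$ on the $k$ rows obtained by the relevant reindexing. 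The set of all possible value-tuples in $A^k$ that a term operation of $\algA$ can realize on these rows is exactly $\Sg_{\algA^k}(M')$ for the appropriate matrix $M'$, and this subuniverse can be computed by closing $M'$ under the basic operations of $\algA$ inside $\algA^k$. Each closure step applies a basic operation of arity $\le \arity(\algA)$ to tuples in $A^k$, and the subuniverse has at most $|A|^k$ elements, so the whole closure costs $O(\arity(\algA)\,|A|^{k(\arity(\algA)+1)})$ — but since we only need to enumerate the at most $m$ relevant value-sets (one per minor symbol) and their sizes are bounded by $|A|^k \le \|\algA\|^k$, the dominant bookkeeping is picking, for each of the $m$ minor symbols, one value-tuple from the corresponding set and checking that every equation of $\Sigma$ holds coordinatewise on $F$. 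There are at most $(\|\algA\|^k)^m = \|\algA\|^{km}$ such choices, and checking a fixed choice against the constantly many equations of $\Sigma$ (each an identity between linear terms, hence evaluable coordinatewise on the $k$ rows) costs $O(k)$, which is absorbed. Multiplying the $O(\|\algA\|^{nk})$ choices of $F$ by the $O(\|\algA\|^{km})$ choices of witnesses and folding in the $O(\arity(\algA))$ factor from computing the closures gives the bound $O(\arity(\algA)\,\|\algA\|^{k(m+n)})$.

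By the $k$-local-global property, $\algA \models \Sigma$ if and only if $\algA$ satisfies $\Sigma$ on every $k$-element $F$ (when $|A|^n \ge k$; the finitely many algebras with $|A|^n < k$ are handled directly in constant time once $\Sigma$, hence $k$ and $n$, is fixed), so the algorithm above decides $\Decide(\Sigma)$ in the stated time. The same argument, run only over idempotent input algebras, proves the analogue for $\Decide^{id}(\Sigma)$ under the idempotent $k$-local-global hypothesis; the only change is that the closure computations stay inside the idempotent algebra $\algA^k$, which changes nothing in the complexity count.

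The main obstacle — really the only subtle point — is making precise that a linear Maltsev condition is genuinely ``local'' in the required sense: that its satisfaction on a set $F$ depends on the witness terms only through finitely much data of size polynomial in $\|\algA\|$, namely their restrictions to the $k$ reindexed rows, and that the set of achievable such restrictions is computable as a subpower-generated subuniverse. The linearity hypothesis is exactly what guarantees that each equation of $\Sigma$ becomes, on $F$, a finite conjunction of coordinatewise equalities between these restricted values (no nesting of the unknown terms forces us outside $F$), and the remark in the Background section identifying $\Sg_{\algA^m}(M)$ with the subuniverse generated by the columns of $M$ gives the effective handle on enumerating the candidates. Once that reduction is spelled out, the time bound is a routine multiplication of the counts above.
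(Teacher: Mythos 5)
Your overall architecture — enumerate the $k$-element subsets $F$, reduce the local check to subpower membership via $\Sg$, multiply the counts — is the same as the paper's, and the final arithmetic (the $\|\algA\|^{km}$ and $\|\algA\|^{kn}$ factors) is correct. But there is a genuine gap in the local-check step, and it is not just expositional.

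You propose, for each minor $d^\alpha$ appearing in $\Sigma$, a $k\times r$ matrix $M_{d^\alpha}$ and the subuniverse $\Sg_{\algA^k}(M_{d^\alpha})\subseteq A^k$, and then to search over independent choices of one value-tuple in $A^k$ from each of the $m$ minors. This loses a crucial coherence constraint: if two distinct minors $f^{\pi_1}$ and $f^{\pi_2}$ of the \emph{same} function symbol $f$ appear in $\Sigma$ (which is precisely the situation for $\Sigma_G$, where there is one symbol $t$ and $|G|$ minors of it), then the values of $f^{\pi_1}$ and $f^{\pi_2}$ on $F$ must be realized by a \emph{single} term operation $t\in\Clo(\algA)$. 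Your independent choices $\overline u_1\in\Sg_{\algA^k}(M_{f^{\pi_1}})$, $\overline u_2\in\Sg_{\algA^k}(M_{f^{\pi_2}})$ only guarantee the existence of some $t_1$ achieving $\overline u_1$ and some $t_2$ achieving $\overline u_2$; nothing forces $t_1=t_2$. So the algorithm as you describe it can accept an algebra that does not in fact satisfy $\Sigma$ on $F$.

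The fix, which is what the paper actually does, is to group the rows by function symbol rather than by minor: for each $\ell$-ary symbol $f$, stack together the rows $\overline a^{\pi}$ for every $\overline a\in F$ and every $\pi\in M(f)$ (the set of maps with $f^\pi$ in $\Sigma$), producing a matrix $S_f$ with $k\cdot|M(f)|$ rows, and compute $\Sg(S_f)\subseteq A^{k\cdot|M(f)|}$. A single tuple in $\Sg(S_f)$ then records the values of \emph{all} minors of $f$ on $F$ under one common witness term, so choosing one tuple per function symbol enforces the coherence. The resulting search space has size $\prod_f |A|^{k\cdot|M(f)|}=|A|^{km}$ (since $\sum_f|M(f)|=m$), so the complexity bound you computed still goes through unchanged — but the algorithm must be organized this way to be correct.
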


\begin{proof}
If $\Sigma$ is empty, it will always be satisfied, so we can solve the problem in a constant time by just outputting ``Yes.'' This is why we will assume that $m,n$ are at least 1. In the rest of the proof we will assume that all the identities in $\Sigma$ are of the form $f^\sigma\approx g^\tau$ where $f$ and $g$ are operation symbols and the maps $\sigma$ and $\tau$ have $[n]$ as their codomain. Generally, the codomains might be smaller, but we can redefine them to be $[n]$ without affecting satisfaction or local satisfaction of $\Sigma$.

Let $\algA$ be an input to $\Decide(\Sigma)$; we want to check whether $\algA \models \Sigma$. Since $\Sigma$ has the local-global property of rank $k$, we only need to check whether $\algA$ satisfies $\Sigma$   on every set $F \subseteq A^n$ with $|F|=k$ (if $|A|^n<k$, we choose $F=A^n$ and use the procedure in the following paragraphs to look for terms satisfying $\Sigma$ on the whole $A^n$). The number of such sets $F$ is at most $|A|^{kn}$.
  
We want to decide if $\algA$ satisfies $\Sigma$ on a given $F$. For a fixed $\ell$-ary function symbol $f \in \Sigma$, let $M(f)$ be the set of all maps $\pi \colon   [\ell] \to [n]$, such that the minor $f^{\pi}$ appears in some identity in $\Sigma$. Consider now the matrix $S_f$ whose rows enumerate all tuples $(a_{\pi(1)},\ldots, a_{\pi(\ell)})$, such that $\overline a = (a_1,\ldots,a_n) \in   F$ and $\pi \in M(f)$. Since we have $k$ choices of $\overline a$ and $|M(f)|$ choices of $\pi$, the matrix $S_f$ has exactly $k \cdot |M(f)|$ rows (some rows might repeat). Denote the columns of $S_f$ by $(\overline s  _1,\ldots, \overline s_\ell )$.
  
  The subuniverse $\Sg(S_f)$ of $\algA^{k \cdot |M(f)|}$ generated by $\{\overline s_1,\ldots, \overline s_\ell\}$ has size at most $|A|^{k \cdot |M(f)|}$, and can be computed in time $C' \arity(\algA) \| \algA \|^{k \cdot   |M(f)|}$ (cf. Proposition 6.1. in \cite{FV-decidingMC}) for some constant $C' > 0$. Note that a tuple $\overline u$ is in $\Sg(S_f)$ if and only if there is a $t \in \Clo(\algA)$, such that the entries of $\overline u$ are of the form $t(a_{\pi(1)},\ldots, a_{\pi(\ell)})$ where $\overline a$ ranges over $F$ and $\pi$ over $M(f)$.
  
  Thus we can check whether $\Sigma$ is satisfied on $F$ by first computing $\Sg(S_{f_i})$ for every function symbol $f_i$ and then trying out all tuples $\overline s_1 \in S_{f_1}, \ldots, \overline s_n \in S_{f_n}$ to see if there is a choice of $s_1,\dots,s_n$ so that the equations $\Sigma$ hold with respect to their entries. This can be done in time
\[
C' \arity(\algA) \sum_{f \in \Sigma}  \|\algA \|^{k \cdot |M(f)|}+ C''\prod_{f \in \Sigma} |A|^{k \cdot |M(f)|} \leq C \arity(\algA) \|\algA \|^{km},
\]
  for some constant $C > 0$ (recall that $m=\sum_{f\in \Sigma}|M(f)|$). Running this test for every subset $F \subseteq A^n$ with $|F|=k$ gives us an algorithm that decides $\algA\models\Sigma$ in time $C \arity  (\algA) \| \algA \|^{km} \cdot |A|^{kn} \leq C \arity(\algA) \| \algA \|^{k(m+n)}$.
\end{proof}

We remark that in all known examples the proofs of the local-global property are constructive, i.e., they inductively construct terms that satisfy a Maltsev condition $\Sigma$ on bigger and bigger subsets of the domain of $\algA$.

For the existence of Maltsev terms this was explicitly pointed out in \cite{KOVZ-minority}: there it was even demonstrated that if $\algA$ has local Maltsev terms, then a circuit representation\footnote{Note that the representation matters here, the same might not be true for representation by terms.} of a global Maltsev term can computed in polynomial time. The same is also true for our local-global results in Section \ref{sect:example} with a proof similar to the one in~\cite{KOVZ-minority}; we omit the details to save space.

\section{$G$-invariant terms} \label{sect:Gterms} 

\begin{definition} \label{definition:Gterms}
Let $G \leq \Sym(n)$ be a permutation group on the set $[n] = \{1,2,\ldots,n\}$. We then say that an algebra $\algA$ has a \emph{$G$-term} $t \in \Clo(\algA)$ if for all 
$\pi \in G$:
\[
\algA \models t(x_1,\ldots,x_n) \approx t^{\pi}(x_1,\ldots,x_n).
\]
We will denote the corresponding Maltsev condition by $\Sigma_G$ for short. 
\end{definition}

Note that if $P$ is a set of generators of $G$, then already the identities $\algA \models t \approx t^{\pi}$ for all $\pi \in P$ imply that $t$ is a $G$-term (in particular, if $|P| = 1$, then $\Sigma_G$ is equivalent to a loop condition, and more general, if $|P|=m$, then $\Sigma_G$ is equivalent to a loop condition of width $m$, as defined in \cite{GJP-pseudoloop}).
Nevertheless, in this paper $\Sigma_G$ always denotes the entire set of identities $t \approx t^\pi$ where $\pi$ ranges over $G$.

Whenever it is convenient to us, we are going to extend Definition \ref{definition:Gterms} to groups $G$ that act on finite sets that are not of the form $\{1,2,\ldots,n\}$; should never cause confusion. 

\begin{example}
Several Maltsev conditions that were studied in the context of (promise) constraint satisfaction problems can be stated as $\Sigma_G$ for some $G$:
\begin{itemize}
\item $n$-ary \emph{cyclic terms} $c(x_1,x_2,\ldots,x_n) \approx c(x_2,x_3,\ldots,x_n,x_1)$ are $\Z_n$-terms, where $\Z_n \leq \Sym(n)$ is the group generated by the cyclic shift $(1,2,3,\dots,n)$.
\item \emph{cyclic loop conditions} (introduced in \cite{cyclic-loop-conditions}) are equivalent to the identities $\Sigma_{\langle \pi \rangle}$ for a single permutation $\pi \in \Sym(n)$;
\item A $n$-ary \emph{symmetric term} is a $\Sym(n)$-term;
\item A $n$-ary \emph{block-symmetric term} with respect to blocks $B_1, \ldots, B_k$ (see e.g. \cite{BGWZ-blocksymmetric}) is a $G$-term, where the direct product $G = \Sym(B_1) \times \cdots \times \Sym(B_k)$ acts naturally on $B_1 \dot\cup \cdots \dot\cup B_k = [n]$.
\end{itemize}
\end{example}

In this section we present some results on how the Maltsev conditions $\Sigma_G$ compare to each other in the interpretability order.\\

If not explicitly stated otherwise, we consider the direct product $G \times H$ of two permutation groups $G \leq \Sym(X), H \leq \Sym(Y)$ to be the permutation group $G \times H \leq \Sym(X \dot{\cup} Y)$, with the natural action 
\[ (\pi,\phi)(z) = \begin{cases} \pi(z) \text{ if } z \in X,\\
\phi(z) \text{ if } z \in Y,\\
\end{cases}
\]
for $\pi \in G, \phi \in H$.

Furthermore, recall that the \emph{wreath product} $G \wr H \leq \Sym(X \times Y)$ of two permutation groups consists of the elements of $G^{Y} \times H$, acting on $\Sym(X \times Y)$, by \[ ((\pi_y)_{y \in Y},\phi)(a,b) = (\pi_b(a),\phi(b)), \text{ for } (a,b) \in X \times Y.
\]

The following basic observations hold:

\begin{lemma} \label{lemma:G-terms}  \
\begin{enumerate}
\item $\Sigma_G$ is trivial if and only if $G$ fixes a point.
\item \label{itm:subgroup} If $G \leq H \leq \Sym(n)$, then every $H$-term is also a $G$-term. 
\item \label{itm:quotient} Let $G \leq \Sym(n)$ and $H\leq \Sym(m)$. If there are surjective maps $h \colon G \to H$ and $\alpha \colon [n] \to [m]$ such that $h(\pi) \circ \alpha = \alpha \circ \pi$ for all $\pi\in G$, then $\Sigma_{H} \leq \Sigma_G$.
\item \label{itm:conjugates}If $G,H\leq \Sym(n)$ are conjugate in $\Sym(n)$, then $\Sigma_G$ and $\Sigma_H$ are equivalent.
\item\label{itm:wreath} 
$\Sigma_{G \wr H}$ is the join of  $\Sigma_G$ and $\Sigma_H$ in the interpretability lattice.

\item \label{itm:products} 
$\Sigma_{G \times H}\leq \Sigma_G \land \Sigma_H$ where $\land$ is the meet in the interpretability lattice.
\end{enumerate}
\end{lemma}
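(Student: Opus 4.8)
The plan is to treat the six items essentially independently, using two ideas: for every comparison except item (5) I will turn the relevant term into another term by taking a \emph{minor}, so that the verification collapses to the composition identities such as $t^{\alpha\beta}(\overline a)=(t^{\beta})^{\alpha}(\overline a)=t(\overline a^{\alpha\beta})$ recorded in Section~\ref{sect:background}; item (5) needs in addition one genuine two-level composition. Concretely, for (1): if $G$ fixes a point $i$, then in every algebra the projection $p_{i}^{n}$ is a $G$-term, since $(p_{i}^{n})^{\pi}=p_{\pi(i)}^{n}=p_{i}^{n}$ for $\pi\in G$, so $\Sigma_{G}$ is trivial; conversely, if $G$ has no fixpoint, take $\algA$ to be the two-element algebra whose only basic operation is the identity, so that $\Clo(\algA)$ is the set of projections, and observe that no $p_{i}^{n}$ is a $G$-term, because some $\pi\in G$ moves $i$ and distinct projections are distinct operations on a two-element set; hence $\algA\not\models\Sigma_{G}$ and $\Sigma_{G}$ is non-trivial.

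Item (2) is immediate, since a $G$-term satisfies a subset of the identities defining an $H$-term. For (3), given a $G$-term $t$ I would check that $s:=t^{\alpha}$ is an $H$-term: for $\phi\in H$ pick $\pi\in G$ with $h(\pi)=\phi$ (possible by surjectivity of $h$) and compute
\[
s^{\phi}=(t^{\alpha})^{h(\pi)}=t^{h(\pi)\circ\alpha}=t^{\alpha\circ\pi}=(t^{\pi})^{\alpha}=t^{\alpha}=s,
\]
using the hypothesis $h(\pi)\circ\alpha=\alpha\circ\pi$ in the third step and $t^{\pi}=t$ in the fifth. Item (4) follows from (3) applied to a conjugating permutation $\sigma$ with $H=\sigma G\sigma^{-1}$, taking $\alpha=\sigma$, $h(\pi)=\sigma\pi\sigma^{-1}$ (and the symmetric choice using $\sigma^{-1}$), giving $\Sigma_{H}\le\Sigma_{G}$ and $\Sigma_{G}\le\Sigma_{H}$. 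For (6), since a meet is a greatest lower bound it suffices to show $\Sigma_{G\times H}\le\Sigma_{G}$ and, symmetrically, $\Sigma_{G\times H}\le\Sigma_{H}$; and if $g$ is a $G$-term then the minor of $g$ along the inclusion $X\hookrightarrow X\dot\cup Y$, namely $t\big((x_{z})_{z\in X\dot\cup Y}\big):=g\big((x_{z})_{z\in X}\big)$, is a $(G\times H)$-term, because $(\pi,\phi)\in G\times H$ acts on the index set $X$ exactly as $\pi\in G$.

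The substantive case is (5). One inclusion is still a minor argument: given a $(G\wr H)$-term $t$ indexed by $X\times Y$, the minor of $t$ along the first projection $X\times Y\to X$ is a $G$-term and the minor along the second projection $X\times Y\to Y$ is an $H$-term, the relevant elements of $G\wr H$ being the diagonal tuple $\big((\pi)_{y\in Y},\id\big)$ and $\big((\id)_{y\in Y},\phi\big)$ respectively; thus $\Sigma_{G\wr H}$ is an upper bound of $\Sigma_{G}$ and $\Sigma_{H}$. For the reverse inclusion, from a $G$-term $g$ of arity $|X|$ and an $H$-term $h$ of arity $|Y|$ of $\algA$ I would form the $|X\times Y|$-ary term
\[
t\big((x_{a,b})_{(a,b)\in X\times Y}\big):=h\Big(\big(\,g\big((x_{a,b})_{a\in X}\big)\big)_{b\in Y}\Big),
\]
i.e. apply $g$ to each ``column'' $\{(a,b):a\in X\}$ and then apply $h$ to the resulting tuple of column values indexed by $Y$. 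Acting on the inputs of $t$ by $\big((\pi_{y})_{y\in Y},\phi\big)\in G\wr H$ rewrites the $b$-th column by $\pi_{b}\in G$, which $g$ absorbs, and then permutes the $Y$-indexed tuple of column values by $\phi\in H$, which $h$ absorbs; hence $t$ is $(G\wr H)$-invariant and $t\in\Clo(\algA)$ as a composition of term operations. Combined with the fact that $\Sigma_{G}\vee\Sigma_{H}$ lies above both $\Sigma_{G}$ and $\Sigma_{H}$, the two inclusions give $\Sigma_{G\wr H}\equiv\Sigma_{G}\vee\Sigma_{H}$.

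I expect the only real difficulty to be the bookkeeping in item (5): keeping straight the direction in which minors compose and how the $X\times Y$-indexing interacts with the wreath action, so that the ``$G$-absorption'' on columns and the ``$H$-absorption'' on the index set $Y$ are applied in the correct order. Everything else is formal manipulation of minors.
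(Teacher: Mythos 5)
Your proposal is correct and follows essentially the same route as the paper's proof: item (1) by the projection/two‑element‑algebra argument, items (2)–(4) by direct comparison of identities and an application of the quotient criterion (3), item (5) by the composition $h\circ g$ plus minors along the two projections, and item (6) by dummy variables. One small remark: in item (4) the paper writes ``choose $\alpha$ so that $G=\alpha H\alpha^{-1}$'' together with $h(\pi)=\alpha\pi\alpha^{-1}$, which does not land in $H$; your choice $H=\sigma G\sigma^{-1}$ with $h(\pi)=\sigma\pi\sigma^{-1}$ and $\alpha=\sigma$ is the consistent one, so your version actually repairs a minor slip in the paper's wording, though the underlying argument is the same.
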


\begin{proof} 
We will proceed point by point.
\begin{enumerate}
\item If $G$ has a fixpoint $i$, then the $i$-th projection $p_i^n(x_1,\ldots,x_n) = x_i$ is a $G$-term, and thus $\Sigma_G$ is trivial. If $G$ has no fixpoint, then it is not hard to see that there is no projection that could be a $G$-term. Thus the algebra on the universe $\{0,1\}$ whose all operations are projections cannot satisfy $\Sigma_G$. Thus $\Sigma_G$ is not trivial.
\item This holds trivially since $G \leq H$ implies $\Sigma_G \subseteq \Sigma_H$.
\item Let $\algA$ be an algebra that contains the $G$-term $f(x_1,\ldots,x_n)$. Let $g = f^\alpha$; we claim that $g$ is a $H$-term in $\algA$. Since $h$ is surjective, each element of $H$ can be written in the form $h(\pi)$ for some $\pi\in G$. To verify that $g$ is a $H$-term, we will thus show that $g^{h(\pi)}=g$ for any $\pi\in G$.

Recall that $h(\pi)\circ \alpha=\alpha\circ \pi$. Therefore, we get
\[
g^{h(\pi)} = \left(f^\alpha\right)^{h(\pi)} = f^{h(\pi) \circ \alpha} = f^{\alpha \circ \pi} = \left(f^{\pi}\right)^{\alpha} = f^{\alpha} = g.
\]
This shows that $g$ is indeed a $H$-term.
\item This follows directly from (3). Choose $\alpha$ so that $G=\alpha H \alpha^{-1}$ and let $h(\pi)=\alpha\circ \pi\circ \alpha^{-1}$.
\item Let $t_G(x_1,\ldots,x_n)$ be a $G$-term and $t_H(x_1,\ldots,x_m)$ be a $H$-term. Then $$t_H( t_G(x_{1,1}, x_{2,1}, \ldots, x_{n,1}) , \ldots, t_G(x_{1,m}, x_{2,m}, \ldots, x_{n,m}))$$
is a $(G \wr H)$-term.
On the other hand, if $s(x_{1,1}, x_{1,2}, \ldots, x_{n,m})$ is a $(G \wr H)$-term, then $s(x_1,\ldots,x_n,\ldots, x_1,\ldots,x_n)$ is a $G$-term, and $s(x_1,\ldots,x_1,\ldots, x_m,\ldots,x_m)$ is a $H$-term.

\item It is enough to show $\Sigma_{G\times H}\leq \Sigma_{G},\Sigma_{H}$. Let $t_G(x_1,\ldots,x_n)$ be a $G$-term. By adding $m$-many dummy variables, we obtain the $G \times H$-term $t_{G \times H}(x_1,\ldots,x_n,y_1,\ldots,y_m) := t_G(x_1,\ldots,x_n)$. This shows that $\Sigma_{G\times H}\leq \Sigma_G$. The proof that $\Sigma_{G\times H}\leq \Sigma_H$ is symmetrical.
\end{enumerate}
\end{proof}

Lemma \ref{lemma:G-terms} can be used to prove that for the prime decomposition $n = p_1^{i_1} \cdot p_2^{i_2} \cdots p_n^{i_n}$ the cyclic term $\Sigma_{\Z_n}$ is equivalent to the join of the conditions $\Sigma_{\Z_{p_1}}$, \ldots,  $\Sigma_{\Z_{p_n}}$. This was already known before, see for instance \cite{mirek-scloops}, where it is further shown that $\Sigma_{\Z_n} \leq \Sigma_{\Z_m}$ if and only if every prime divisor of $n$ is also a prime divisor of $m$.

More generally, the cyclic loop conditions, i.e., the identities $\Sigma_{\langle \pi \rangle}$, for groups with one generator $\pi$ were completely classified up to interpretability in \cite{cyclic-loop-conditions}:
\begin{theorem}[Theorem 5.10 and 5.23 in \cite{cyclic-loop-conditions}] \label{theorem:loop-condition}\
\begin{enumerate}
\item $\Sigma_{\langle \pi \rangle} \geq \Sigma_{\langle \rho \rangle}$ if and only if for every orbit of $\pi$ that has length $n$, $\rho$ has an orbit of length $m$ such that the radical $\rad(m)$ divides $\rad(n)$.
\item Moreover $\Sigma_{\langle \pi \rangle}$ is the join of all $\Sigma_{\langle \rho \rangle}$ such that $\Sigma_{\langle \pi \rangle} \geq \Sigma_{\langle \rho \rangle}$ and $\langle \rho \rangle$ has only orbits of distinct prime lengths.
\end{enumerate}
\end{theorem}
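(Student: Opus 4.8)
This is Theorems~5.10 and~5.23 of \cite{cyclic-loop-conditions}; we only indicate the strategy, using the statement in the sequel as a black box. The single-orbit case of both parts is the known classification recalled just above (see \cite{mirek-scloops}), so the content is the extension to arbitrary permutations, and the plan is to reduce that extension to the single-orbit facts together with Lemma~\ref{lemma:G-terms}.

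The first step is to see that $\Sigma_{\langle\pi\rangle}$ is unchanged if every orbit length is replaced by its radical, so that we may assume all orbit lengths of $\pi$ (and of $\rho$) are squarefree. For a single orbit this rests on $\Sigma_{\Z_{p^{k}}}\equiv\Sigma_{\Z_p}$: the inequality $\Sigma_{\Z_p}\le\Sigma_{\Z_{p^{k}}}$ is Lemma~\ref{lemma:G-terms}(\ref{itm:quotient}) for the equivariant quotient $\Z_{p^{k}}\to\Z_p$ that reduces the $p^{k}$-element regular orbit modulo $p$, and $\Sigma_{\Z_{p^{k}}}\le\Sigma_{\Z_p}$ holds because the regular $\Z_{p^{k}}$ embeds, as a permutation group on $p^{k}$ points, into the $k$-fold wreath power of $\Z_p$ (up to conjugacy, so Lemma~\ref{lemma:G-terms}(\ref{itm:conjugates}) lets us take it to be an actual subgroup), whence Lemma~\ref{lemma:G-terms}(\ref{itm:subgroup}) and~(\ref{itm:wreath}) give $\Sigma_{\Z_{p^{k}}}\le\Sigma_{\Z_p\wr\cdots\wr\Z_p}=\Sigma_{\Z_p}$. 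Applying the two moves --- a quotient to shrink an orbit, a wreath power to enlarge it --- orbit by orbit performs the reduction.

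For part~(1), the ``if'' direction is an explicit construction. Assuming each orbit length $n$ of $\pi$ admits an orbit length $m$ of $\rho$ with $\rad(m)\mid\rad(n)$ --- under the reduction: each orbit length of $\rho$ divides some orbit length of $\pi$ --- we build a $\rho$-term from a $\pi$-term $t$ by assigning to each orbit $O'$ of $\rho$ an orbit $O$ of $\pi$ with $|O'|\mid|O|$, using that $\langle\pi\rangle$ contains the element that acts as a power of the $|O|$-cycle on $O$ and trivially elsewhere to extract from $t$ a term that is $|O'|$-cyclic on the block indexed by $O'$ (invoking $\Sigma_{\Z_{|O'|}}\le\Sigma_{\Z_{|O|}}$ and, for composite $|O|$, $\Sigma_{\Z_{|O|}}\equiv\bigvee_{p\mid|O|}\Sigma_{\Z_p}$), leaving unneeded orbits of $\rho$ as dummy variables, and composing these terms along the orbit structure of $\rho$. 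The ``only if'' direction is the hard half: when the hypothesis fails --- some orbit length $n$ of $\pi$ has no orbit length $m$ of $\rho$ with $\rad(m)\mid\rad(n)$ --- one must exhibit an algebra satisfying $\Sigma_{\langle\pi\rangle}$ but not $\Sigma_{\langle\rho\rangle}$, and the natural candidates are polymorphism algebras of structures built from directed cycles of prime length (a finite product $\prod_{p\in S}\vec{C}_p$, or an affine structure over a prime field) whose full list of satisfied cyclic loop conditions one computes exactly; this computation is the technical heart of \cite{cyclic-loop-conditions} and the step we expect to demand the most care.

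Part~(2) then follows by combining~(1) with the single-orbit identity $\Sigma_{\Z_n}\equiv\bigvee_{p\mid n}\Sigma_{\Z_p}$. By~(1), the prime-type conditions below $\Sigma_{\langle\pi\rangle}$ --- those $\Sigma_{\langle\rho\rangle}$ with $\rho$ having orbits of pairwise distinct prime lengths --- are exactly the $\Sigma_{\langle\sigma_T\rangle}$ for which $\sigma_T$ has orbits of the distinct primes in a set $T$ meeting the prime divisors of every orbit length of $\pi$; a smaller such $T$ gives a larger condition, so their join is the finite join $J$ over the minimal ones. Each $\Sigma_{\langle\sigma_T\rangle}\le\Sigma_{\langle\pi\rangle}$ by the ``if'' direction, giving $J\le\Sigma_{\langle\pi\rangle}$; for the reverse inequality one rebuilds a $\pi$-term from terms witnessing the $\Sigma_{\langle\sigma_T\rangle}$ over the minimal $T$, handling each orbit of $\pi$ of length $n$ through the primes of those $T$ dividing $n$ together with $\Sigma_{\Z_n}\equiv\bigvee_{p\mid n}\Sigma_{\Z_p}$ --- exactly the composition already carried out for one orbit in the discussion preceding this theorem. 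Since ``orbits of pairwise distinct prime lengths'' is precisely the prime-type condition on $\rho$, this yields the stated description of $\Sigma_{\langle\pi\rangle}$ as a join.
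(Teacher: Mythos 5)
This is a cited result (Theorem~5.10 and~5.23 of \cite{cyclic-loop-conditions}); the paper itself gives no proof of it --- the surrounding text only handles the single-orbit case $\Sigma_{\Z_n}\leq\Sigma_{\Z_m}\Leftrightarrow\rad(n)\mid\rad(m)$ via Lemma~\ref{lemma:G-terms}. So there is no in-paper argument to compare against, and your sketch stands or falls on its own. Its overall shape --- reduce to squarefree orbit lengths, prove ``if'' by an explicit construction, prove ``only if'' by exhibiting separating (affine) algebras, then deduce~(2) from~(1) and the prime decomposition --- is reasonable, and the single-orbit manipulations ($\Sigma_{\Z_{p^k}}\equiv\Sigma_{\Z_p}$ via the quotient and wreath-power parts of Lemma~\ref{lemma:G-terms}) are argued correctly.

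There is, however, a concrete error in your ``if'' direction that would derail the construction. After the squarefree reduction, the theorem's hypothesis reads: \emph{for each orbit length $n$ of $\pi$ there exists an orbit length $m$ of $\rho$ with $m\mid n$.} You restate it as ``each orbit length of $\rho$ divides some orbit length of $\pi$'', which flips the quantifiers and is a genuinely different condition. For instance, if $\pi$ is a single $6$-cycle and $\rho$ has orbits of lengths $2$ and $5$, the theorem's hypothesis holds and indeed $\Sigma_{\Z_6}\geq\Sigma_{\langle\rho\rangle}$ (make the $5$-orbit of $\rho$ dummy and take the $2$-cyclic minor of the $6$-cyclic term); your restatement fails because $5\nmid 6$. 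Conversely, for $\pi$ with orbits of lengths $6$ and $5$ and $\rho$ a single $2$-cycle, your restatement holds but the theorem's condition fails, and in fact $\Sigma_{\langle\pi\rangle}\not\geq\Sigma_{\Z_2}$: the affine algebra over $\Z_2$ has the $\langle\pi\rangle$-term $x_7+\cdots+x_{11}$ but no $2$-cyclic term. Correspondingly, your construction ``assign to each orbit $O'$ of $\rho$ an orbit $O$ of $\pi$ with $|O'|\mid|O|$'' goes the wrong way: the equivariant map behind the minor is a total map from $\pi$'s domain onto $\rho$'s domain, so the assignment must send each $\pi$-orbit to a $\rho$-orbit of dividing length, with $\rho$-orbits not in the image becoming the dummies. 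Finally, the assertion that ``$\langle\pi\rangle$ contains the element that acts as a power of the $|O|$-cycle on $O$ and trivially elsewhere'' is too strong: the elements of the cyclic group $\langle\pi\rangle$ that are trivial off $O$ act on $O$ only as those powers that are multiples of the lcm of the remaining orbit lengths, which need not generate the full cycle on $O$. Correcting the quantifier and the direction of the orbit assignment is necessary before the construction can be completed.
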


We next generalize some of these results to groups with more than one generator, starting with the following result about $p$-groups:

\begin{lemma} \label{lemma:p-Sylow}
Let $p$ be a prime. Then
the cyclic identities $\Sigma_{\Z_p}$ imply $\Sigma_G$ for every $p$-group $G$. If $G$ is a $p$-group that contains a permutation with no fixpoint, then $\Sigma_G$ is equivalent to $\Sigma_{\Z_p}$.
\end{lemma}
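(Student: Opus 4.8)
The plan is to prove the two inequalities $\Sigma_G \le \Sigma_{\Z_p}$ and $\Sigma_{\Z_p} \le \Sigma_G$ separately, the first for every $p$-group $G$ and the second under the extra hypothesis, and then combine them.

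For $\Sigma_G \le \Sigma_{\Z_p}$ I would not argue from the abstract group structure of $G$ but pass to a larger group. If $G$ fixes a point then $\Sigma_G$ is trivial by Lemma~\ref{lemma:G-terms}(1) and there is nothing to prove, so assume $G$ has no fixed point. Since $G$ is a $p$-subgroup of $\Sym(n)$, Sylow's theorem places it inside some Sylow $p$-subgroup $P$ of $\Sym(n)$, and by Lemma~\ref{lemma:G-terms}(\ref{itm:subgroup}) we get $\Sigma_G \le \Sigma_P$, so it suffices to bound $\Sigma_P$. As $\mathrm{Fix}(P)\subseteq\mathrm{Fix}(G)=\emptyset$, $P$ also has no fixed point, and by the classical (Kaloujnine) description of Sylow $p$-subgroups of symmetric groups, $P$ is conjugate in $\Sym(n)$ to a direct product $\prod_{i\ge1} W_i^{a_i}$ (no fixed points means no trivial factors), where $W_1=\Z_p$ acting on $[p]$, $W_i=W_{i-1}\wr\Z_p$ acting on $[p^{i-1}]\times[p]$, and $n=\sum_{i\ge1}a_ip^i$; in particular $a_{i_0}\ne0$ for at least one $i_0\ge1$. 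By Lemma~\ref{lemma:G-terms}(\ref{itm:conjugates}) we may work with this product. An induction on $i$ using Lemma~\ref{lemma:G-terms}(\ref{itm:wreath}) gives $\Sigma_{W_i}=\Sigma_{W_{i-1}}\vee\Sigma_{\Z_p}=\Sigma_{\Z_p}$ for all $i\ge1$ (base case $W_1=\Z_p$), and iterating Lemma~\ref{lemma:G-terms}(\ref{itm:products}) over the factors gives $\Sigma_{\prod_i W_i^{a_i}}\le\Sigma_{W_{i_0}}$. Chaining, $\Sigma_G\le\Sigma_P=\Sigma_{\prod_iW_i^{a_i}}\le\Sigma_{W_{i_0}}=\Sigma_{\Z_p}$.

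For the converse $\Sigma_{\Z_p}\le\Sigma_G$, let $\sigma\in G$ be a fixed-point-free permutation. Since $G$ is a $p$-group, $\sigma$ has order a power of $p$, so every cycle of $\sigma$ has length a power of $p$, and as $\sigma$ has no fixed point all these lengths are $\ge p$. By Lemma~\ref{lemma:G-terms}(\ref{itm:subgroup}) we have $\Sigma_{\langle\sigma\rangle}\le\Sigma_G$, so it is enough to show $\Sigma_{\Z_p}\le\Sigma_{\langle\sigma\rangle}$. This follows from Theorem~\ref{theorem:loop-condition}(1) applied with $\pi=\sigma$ and $\rho$ a $p$-cycle: every orbit of $\sigma$ has length $p^i$ with $\rad(p^i)=p$, while the single orbit of $\rho$ has length $p$ with $\rad(p)=p\mid p$, so $\Sigma_{\langle\sigma\rangle}\ge\Sigma_{\langle\rho\rangle}=\Sigma_{\Z_p}$. (Alternatively one can avoid the theorem: for each orbit $\{z,\sigma z,\dots,\sigma^{\ell-1}z\}$ of $\sigma$, with $\ell=p^i$, the assignment $\sigma^tz\mapsto t\bmod p$ is well defined since $p\mid\ell$; these maps assemble into a surjection $\alpha\colon[n]\to[p]$ with $\alpha\circ\sigma=\rho\circ\alpha$, and Lemma~\ref{lemma:G-terms}(\ref{itm:quotient}), with $h\colon\langle\sigma\rangle\twoheadrightarrow\Z_p$, yields $\Sigma_{\Z_p}\le\Sigma_{\langle\sigma\rangle}$.) Combining the two inequalities gives the asserted equivalence.

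The only ingredient that is not a formal manipulation of Lemma~\ref{lemma:G-terms} and Theorem~\ref{theorem:loop-condition} is the structure theorem for Sylow $p$-subgroups of symmetric groups, and the point that needs care there is recording its permutation action correctly --- namely that $P$ acts as the disjoint union of the natural actions of its wreath-product factors $W_i$ on $[p^i]$ --- so that the wreath-product and direct-product parts of Lemma~\ref{lemma:G-terms} apply with their intended group actions. I expect this bookkeeping, rather than any single difficult step, to be the main thing to get right.
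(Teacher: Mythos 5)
Your proof is correct and follows essentially the same route as the paper's: for $\Sigma_{\Z_p}\geq\Sigma_G$ you pass to a Sylow $p$-subgroup and invoke Kaloujnine's product-of-wreath-products structure together with parts (\ref{itm:subgroup}), (\ref{itm:conjugates}), (\ref{itm:wreath}), (\ref{itm:products}) of Lemma~\ref{lemma:G-terms}, exactly as the paper does (the explicit case split on whether $G$ fixes a point is harmless but not needed, since the argument goes through uniformly once $n\geq p$). For the converse the paper constructs the $p$-ary minor $c(x_1,\dots,x_p)=t(x_1,\dots,x_p,x_1,\dots,x_p,\dots)$ directly and checks it is cyclic, which is precisely the content of your parenthetical alternative via Lemma~\ref{lemma:G-terms}(\ref{itm:quotient}); invoking Theorem~\ref{theorem:loop-condition} instead is a valid but heavier-handed way to get the same inequality.
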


\begin{proof}
For every $n \in \N$, let $S_p(n)$ denote a Sylow $p$-subgroup of $\Sym(n)$. It is well known that $S_p(p)$ is equal to the cyclic group $\Z_p$, and $S_p(p^{k+1})$ is equal to the wreath product $S_p(p^{k}) \wr S_p(p)$ for every $k$. Repeated application of part (\ref{itm:wreath}) of Lemma \ref{lemma:G-terms} thus gives us that the condition $\Sigma_{S_p(p^k)}$ is equivalent to $\Sigma_{\Z_p}$ for every $k\in\N$.

For a general $n \in \N$, with base $p$ expansion $n = a_0 + a_1p + \cdots + a_{k}p^k$, we have $S_p(n) = \prod_{i = 0}^k (S_p(p^i))^{a_i}$ (by a result of Kalojnine~\cite{kalojnine-sylowSym}, see also~\cite[p. 176]{rotman-grouptheory}). By item (\ref{itm:products}) of Lemma \ref{lemma:G-terms}, the condition $\Sigma_{\Z_p}$ implies $\Sigma_{S_p(n)}$ for every $n\geq p$. Every $p$-group $G \leq \Sym(n)$ is (up to conjugation) a subgroup of $S_p(n)$, so by items (\ref{itm:subgroup}) and (\ref{itm:conjugates}) of Lemma \ref{lemma:G-terms}, $\Sigma_{\Z_p}$ implies $\Sigma_{G}$.

For the second part, it remains only to show that $\Sigma_G$ implies $\Sigma_{\Z_p}$. Assume that $G$ is a $p$-group and that there is a permutation $\pi \in G$ with no fixpoint.  Without loss of generality  assume that that the decomposition of $\pi$ into disjoint cycles has the form
\[
\pi=(1,2,\dots, a_1)(a_1+1, a_1+2, \dots, a_2) \cdots (a_m, a_m+1,\dots,n)
\]
for some suitable $a_1,\dots,a_m$.
Since $G$ is a $p$-group, the lengths of all orbits of $\pi$ need to be powers of $p$. Let $t$ be a $G$-term. Then the $\pi$-invariance of $t$ gives us that the minor of $t$
\[
c(x_1,\ldots,x_p) = t(x_1,x_2,\ldots,x_p,x_1,\dots,x_p,\ldots ,x_1,\ldots,x_p)
\] 
will be a $\Z_p$-term. Hence $\Sigma_G$ implies $\Sigma_{\Z_p}$ and we are done.
\end{proof}

An interesting class of conditions within the scope of Lemma~\ref{lemma:p-Sylow} are the \emph{'doubly cyclic'} identities given by the wreath-product $\Z_p \wr \Z_p$. These conditions recently found an application in the study of finitary tractable PCSPs~\cite{AB-finitelytractable}. We remark that Lemma \ref{lemma:p-Sylow} does not exclude the possibility of $p$-groups $G$, such that $\Sigma_G$ is non-trivial, but strictly weaker than the existence of $p$-cyclic terms. The smallest possible candidate is the group $G = \langle (12)(34), (12)(56) \rangle \leq \Sym(6)$ for $p = 2$.

Another example of $G$-term conditions equivalent to cyclic terms are the terms given by even dihedral groups:

\begin{lemma}\label{lem:D2n} Let $n\in\N$ and let $D_{2n} = \langle \pi, \phi \rangle \leq \Sym(2n)$ be the dihedral group of order $2n$,  where $\pi = (1,2, \dots, 2n)$ and $\phi = (1,2n)(2, 2n-1) \cdots (n, n+1)$ (so $D_{2n}$ can be regarded as the symmetry group of a $2n$-gon, acting on its vertices $\{1,2,\ldots,2n\}$). Then $\Sigma_{D_{2n}}$ is equivalent to $\Sigma_{\Z_{2n}} = \Sigma_{\Z_{2}} \lor \Sigma_{\Z_{n}}$.
\end{lemma}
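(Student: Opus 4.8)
The plan is to prove the two interpretability inequalities separately; the equality $\Sigma_{\Z_{2n}} = \Sigma_{\Z_2}\lor\Sigma_{\Z_n}$ appearing in the statement is not new and I would only recall it: by the paragraph following Lemma~\ref{lemma:G-terms} (and \cite{mirek-scloops}) the condition $\Sigma_{\Z_m}$ is equivalent to the join of the $\Sigma_{\Z_p}$ over the primes $p$ dividing $m$, and since the primes dividing $2n$ are exactly $2$ together with the primes dividing $n$, applying this to $m=2n$ yields $\Sigma_{\Z_{2n}}\equiv\Sigma_{\Z_2}\lor\Sigma_{\Z_n}$. So the work is to show $\Sigma_{D_{2n}}\equiv\Sigma_{\Z_{2n}}$. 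One direction, $\Sigma_{\Z_{2n}}\leq\Sigma_{D_{2n}}$, is immediate from Lemma~\ref{lemma:G-terms}(\ref{itm:subgroup}): since $\Z_{2n}=\langle\pi\rangle$ is a subgroup of $D_{2n}=\langle\pi,\phi\rangle$, every $D_{2n}$-term is in particular a $\Z_{2n}$-term.

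For the reverse inequality $\Sigma_{D_{2n}}\leq\Sigma_{\Z_{2n}}$ I would take an algebra $\algA$ with a $\Z_{2n}$-term $c(x_1,\dots,x_{2n})$ (so $c^\pi=c$ in $\algA$) and construct a $D_{2n}$-term from two minors of $c$. First, the binary minor $s(x,y):=c(x,y,x,y,\dots,x,y)$, obtained by assigning $x$ to the odd-indexed and $y$ to the even-indexed variables, is \emph{commutative}: one cyclic shift of $c$ turns $s(y,x)$ into $s(x,y)$, where it is essential that $c$ has the even arity $2n$. Second, the minor $c':=c^\phi$ is again a $\Z_{2n}$-term: the dihedral relation $\pi\phi=\phi\pi^{-1}$ together with the $\langle\pi\rangle$-invariance of $c$ gives
\[
(c^\phi)^\pi=c^{\pi\phi}=c^{\phi\pi^{-1}}=(c^{\pi^{-1}})^\phi=c^\phi .
\]
Now set $t:=s(c,c')$. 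Then $t^\pi=s(c^\pi,c'^\pi)=s(c,c')=t$, and $t^\phi=s(c^\phi,(c^\phi)^\phi)=s(c',c)=s(c,c')=t$, the last step using the commutativity of $s$. Since $t$ is invariant under the generators $\pi,\phi$ of $D_{2n}$, it is a $D_{2n}$-term (remark after Definition~\ref{definition:Gterms}), so $\algA\models\Sigma_{D_{2n}}$.

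Putting the two inequalities together yields $\Sigma_{D_{2n}}\equiv\Sigma_{\Z_{2n}}$, which with the recalled identity $\Sigma_{\Z_{2n}}=\Sigma_{\Z_2}\lor\Sigma_{\Z_n}$ completes the proof. I do not expect a genuine obstacle here once one sees that symmetrizing over the two cosets of $\langle\pi\rangle$ in $D_{2n}$ by means of a commutative binary term does the job; the only delicate points are bookkeeping — verifying that $c^\phi$ is still $\pi$-invariant (which reduces to the identity $\pi\phi=\phi\pi^{-1}$ for the explicit permutations, while keeping the composition order $t^{\beta\circ\alpha}=(t^\alpha)^\beta$ straight) and checking the parity in the definition of the commutative minor $s$.
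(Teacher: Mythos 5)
Your proof is correct and follows essentially the same approach as the paper: both constructions apply a commutative binary minor of the cyclic term $c$ to the pair $(c, c^\phi)$, and the only cosmetic difference is that you take the alternating minor $s(x,y)=c(x,y,x,y,\dots)$ where the paper takes the block minor $c_2(x,y)=c(x,\dots,x,y,\dots,y)$. Your explicit verification that $c^\phi$ is still $\pi$-invariant via $\pi\phi=\phi\pi^{-1}$ spells out a step the paper leaves as ``not hard to see.''
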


\begin{proof}
Since $\Z_{2n}$ is a subgroup of $D_{2n}$, clearly $\Sigma_{D_{2n}}\geq \Sigma_{\Z_{2n}}$. For the other direction, let $c_{2n}$ be a $\Z_{2n}$-term. Then $c_2(x,y)=t(x,x,\dots,x,y,\dots,y)$ is clearly also cyclic of arity $2$. We define
\[t(x_1,\ldots,x_{2n}) = c_2(c_{2n}(x_1,x_2,\ldots,x_{2n}), c_{2n}(x_{2n}, x_{2n-1},\ldots,x_1))
\] It is not hard to see that $t$ is invariant under both $\pi$ and $\phi$.  Thus $t$ is a $D_{2n}$-term.
\end{proof}

Our later results about dihedral groups of odd degree $D_{2n+1}$ (Part (\ref{itm:dihedral}) of Corollary~\ref{corollary:failure}) indicates, however, that no analogical statement is true for them (see also Question \ref{question:equivalence}).

In the last lemma of this section we study $\Sigma_G$ for \emph{regular permutation groups $G \leq \Sym(G)$}, that is, groups that act on themselves by the left translation $\pi(\phi) = \pi \circ \phi$. 

\begin{lemma} \label{lemma:regular}
Let $G \leq \Sym(G)$ be a regular permutation group, $N \trianglelefteq G$ be a normal subgroup and $K = G/N$. Let us regard $N\leq \Sym(N)$ and $K\leq \Sym(K)$ both also as regular permutation groups. Then $\Sigma_G \geq \Sigma_N \lor \Sigma_K$. \\ If furthermore $G \cong N \times K$ (as abstract groups), then $\Sigma_G = \Sigma_N \lor \Sigma_K$.
\end{lemma}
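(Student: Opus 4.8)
The plan is to prove the inequality $\Sigma_G \geq \Sigma_N \vee \Sigma_K$ first, by establishing the two inequalities $\Sigma_G \geq \Sigma_N$ and $\Sigma_G \geq \Sigma_K$ separately; since $\Sigma_N \vee \Sigma_K$ is the least upper bound in the interpretability lattice, these two together are equivalent to $\Sigma_G \geq \Sigma_N \vee \Sigma_K$. The inequality $\Sigma_G \geq \Sigma_K$ is immediate from Lemma \ref{lemma:G-terms}(\ref{itm:quotient}): take for both $h$ and $\alpha$ the quotient homomorphism $\beta\colon G \to K$. Left translation by $\pi \in G$ on $G$ is carried by $\beta$ to left translation by $\beta(\pi)$ on $K$, so $\beta(\pi)\circ\beta = \beta\circ\pi$ for all $\pi \in G$, which is exactly the hypothesis of part (\ref{itm:quotient}); hence $\Sigma_K \leq \Sigma_G$.

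For $\Sigma_G \geq \Sigma_N$ I would argue in two steps. First, $N$ is a subgroup of $G$, so as permutation groups on the set $G$ we have $N \leq G$ (both acting by left translation), and Lemma \ref{lemma:G-terms}(\ref{itm:subgroup}) tells us that any $G$-term is also invariant under the left-translation action of $N$ on $G$. Second, I collapse this action down to the regular action of $N$ on $N$: fix a transversal $T$ of $N$ in $G$ and write each $g \in G$ uniquely as $g = n_g r_g$ with $n_g \in N$ and $r_g \in T$; then the map $\alpha\colon G \to N$, $g \mapsto n_g$, together with $h = \id_N$, satisfies $h(m)\circ\alpha = \alpha\circ m$ for every $m \in N$, because $\alpha(mg) = m\,n_g$. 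Applying Lemma \ref{lemma:G-terms}(\ref{itm:quotient}) once more gives $\Sigma_N \leq \Sigma_G$. Combining the two inequalities yields $\Sigma_G \geq \Sigma_N \vee \Sigma_K$.

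For the final statement we additionally assume $G \cong N \times K$ as abstract groups and must show $\Sigma_G \leq \Sigma_N \vee \Sigma_K$. Since $G$ and the regular representation of $N\times K$ are isomorphic as permutation groups, $\Sigma_G$ is, up to a relabeling of variables, the $G$-term condition for the regular action of $N \times K$ on the product set $N \times K$. The crucial point is that this regular representation is a subgroup of the wreath product $N \wr K \leq \Sym(N \times K)$ (with $N$ and $K$ acting regularly on themselves): the map sending $(n_0,k_0) \in N\times K$ to the wreath element whose bottom component is left translation by $k_0$ and all of whose top components are left translation by $n_0$ is an injective group homomorphism, and its image sends $(a,b)$ to $(n_0 a, k_0 b)$, i.e.\ acts exactly like left translation by $(n_0,k_0)$. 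Therefore Lemma \ref{lemma:G-terms}(\ref{itm:subgroup}) shows every $(N \wr K)$-term is a $G$-term, and since $\Sigma_{N\wr K} = \Sigma_N \vee \Sigma_K$ by Lemma \ref{lemma:G-terms}(\ref{itm:wreath}), we get $\Sigma_G \leq \Sigma_N \vee \Sigma_K$. Together with the first part this gives $\Sigma_G = \Sigma_N \vee \Sigma_K$.

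The steps that require genuine care are the two applications of part (\ref{itm:quotient}): one must check that the maps in question really intertwine the two group actions. For the $N$-part this works precisely because, after restricting to $N \leq G$, we only need to intertwine left translations by elements of $N$ (the analogous identity fails for $\pi \notin N$, which is why the restriction step must come first and why one cannot apply part (\ref{itm:quotient}) to $G$ directly). The other point to get right is the wreath-product bookkeeping in the last paragraph, so that the regular representation of $N \times K$ is placed inside $N \wr K$ and not inside the direct product $N \times K \leq \Sym(N \dot{\cup} K)$, which acts on a disjoint union rather than a product and would be the wrong group. Neither of these is a serious obstacle.
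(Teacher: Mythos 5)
Your proof is correct, but both halves take a genuinely different route from the paper.

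For $\Sigma_G \geq \Sigma_N$, the paper constructs the $N$-term directly: it groups the variables of a $G$-term $f$ into blocks indexed by the right cosets $N\kappa_1,\dots,N\kappa_q$, observes that the identities $f^\mu\approx f$ for $\mu\in N$ only translate within blocks, and then takes the diagonal minor $f((x_\nu)_{\nu\in N},\dots,(x_\nu)_{\nu\in N})$. You instead factor the same construction through two pieces of Lemma~\ref{lemma:G-terms}: first (\ref{itm:subgroup}) to pass from $G$ acting on $G$ down to $N$ acting on $G$, then (\ref{itm:quotient}) with the coset-projection map $\alpha(g)=n_g$ and $h=\id_N$. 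Both routes use the same map $\alpha$; yours makes the intertwining condition explicit and, correctly, flags that the restriction step is essential because $\alpha$ intertwines left translation only by elements of $N$, not by all of $G$. For $\Sigma_G\geq \Sigma_K$ you use (\ref{itm:quotient}) with $h=\alpha$ the quotient map exactly as the paper does.

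For the converse under $G\cong N\times K$, the paper again works by hand: it writes down the composite term $f_K(f_N((x_{\nu\kappa_1})_{\nu\in N}),\dots,f_N((x_{\nu\kappa_q})_{\nu\in N}))$ and verifies invariance under $N$ and $K$ separately. You instead observe that the regular permutation representation of $N\times K$ on $N\times K$ embeds into the wreath product $N\wr K$ (via $(n_0,k_0)\mapsto ((\lambda_{n_0})_{y\in K},\lambda_{k_0})$, a map you correctly verify to be an injective homomorphism whose image acts by left translation), and then conclude from parts (\ref{itm:subgroup}) and (\ref{itm:wreath}) of Lemma~\ref{lemma:G-terms}. This is a nice conceptual restatement of the same construction --- unwinding the proof of (\ref{itm:wreath}) yields exactly the paper's nested term --- and it reuses existing machinery rather than reproving an instance of it. Your cautionary remark distinguishing $N\wr K\leq\Sym(N\times K)$ from $N\times K\leq\Sym(N\mathbin{\dot\cup}K)$ is precisely the point at which one could go wrong, and you handle it correctly.
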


\begin{proof}
Let $\kappa_1,\ldots,\kappa_q$ be a list of elements of $G$ such that $\{ N \kappa_1, N\kappa_2,\ldots, N \kappa_q \}$ enumerates the (right) cosets of $N$.  We group the variables of a $G$-term $f$ in blocks that correspond to the cosets of $N$,  i.e.,  $f((x_{\nu \kappa_1})_{\nu \in N}, \ldots, (x_{\nu \kappa_q})_{\nu \in N})$.

Since $f$ is a $G$-term, and $G$ acts by left translation, we get that for every $\mu \in N$: $f((x_{\mu \nu \kappa_1})_{\nu \in N}, \ldots, (x_{\mu \nu \kappa_q})_{\nu \in N}) = f((x_{\nu \kappa_1})_{\nu \in N}, \ldots, (x_{\nu \kappa_q})_{\nu \in N})$. This directly implies that the minor $f((x_\nu)_{\nu \in N}, \ldots, (x_\nu)_{\nu \in N})$ is an $N$-term.  Hence $\Sigma_G$ implies $\Sigma_N$.

In order to see that $\Sigma_G$ implies $\Sigma_K$,  we use criterion (\ref{itm:quotient}) of Lemma \ref{lemma:G-terms}. We define both $h \colon G \to K$ and $\alpha \colon G \to K$ to be the quotient map $\alpha(\pi) = h(\pi) = \pi N$. Then $h(\pi) \circ \alpha (\rho) = \pi N \circ \rho N = (\pi \circ \rho)N = \alpha \circ \pi (\rho)$, for all $\pi, \rho \in G$. Thus $\Sigma_G$ implies $\Sigma_K$. Together with the above, this shows that $\Sigma_G \geq \Sigma_N \lor \Sigma_K$.

For the second part of the lemma, let $G \cong N \times K$ and let $f_N$ be an $N$-term, and $f_K$ be a $K$-term. Since $G$ is the direct product of $N$ and $K$, every element of $G$ can be written as a unique product $\nu \kappa$ with $\nu \in N$ and $\kappa \in K$. Let $K = \{\kappa_1,\ldots,\kappa_q \}$.  Then we define $f((x_\pi)_{\pi \in G}) = f_K(f_N((x_{\nu  \kappa_1})_{\nu \in N}),\ldots,f_N((x_{\nu  \kappa_q})_{\nu \in N}))$.
It is not hard to see that $f$ is invariant under translating its variables by elements from $N$.  Moreover, for every $\kappa \in K$ we have:
\begin{align*}
f^\kappa((x_\pi)_{\pi \in G}) & = f_K(f_N((x_{\kappa \nu  \kappa_1})_{\nu \in N}), \ldots,f_N((x_{\kappa \nu  \kappa_q})_{\nu \in N}))\\
&= f_K(f_N((x_{ \nu  \kappa \kappa_1})_{\nu \in N}),\ldots,f_N((x_{ \nu \kappa  \kappa_q})_{\nu \in N}))\\
&=(f_K)^\kappa(f_N((x_{\nu \kappa_1})_{\nu \in N}),\ldots,f_N((x_{\nu \kappa_q})_{\nu \in N}))=f((x_\pi)_{\pi \in G}),
\end{align*}
where the third line is equal to the second because the action of $\kappa$ permutes the $\kappa_i$'s and in the same way it permutes the $f_N((x_{\nu\kappa_i})_{\nu\in N})$'s. Finally, the identity in the last line uses that $f_K$ is a $K$-term. Since $N$ and $K$ generate $G$, $f$ is a $G$-term, which concludes the proof.
\end{proof}

Lemma~\ref{lemma:regular} in particular implies the following:

\begin{corollary}\label{cor:product}
Let $G \leq \Sym(G)$ be a regular nilpotent permutation group Then $\Sigma_G$ is equivalent to the join of $\Sigma_{\Z_{p_1}}, \ldots, \Sigma_{\Z_{p_n}}$, where $p_1,\ldots,p_n$ are the prime divisors of $|G|$.
\end{corollary}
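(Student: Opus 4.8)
The plan is to deduce Corollary~\ref{cor:product} from Lemma~\ref{lemma:regular} by induction on the number of prime factors of $|G|$ (counted with multiplicity), using the structure theory of finite nilpotent groups. Recall that a finite nilpotent group $G$ is the internal direct product of its Sylow subgroups, $G \cong P_1 \times P_2 \times \cdots \times P_r$, where $P_j$ is the (unique, normal) Sylow $p_j$-subgroup and $p_1, \ldots, p_r$ are the distinct prime divisors of $|G|$. Each $P_j$ is itself nilpotent, hence a $p_j$-group.

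First I would handle the case where $G$ is a $p$-group. Since $G$ is regular, it acts on itself by left translation, and the identity permutation is the only element with a fixpoint; every non-identity element is fixpoint-free (this is exactly regularity). As $|G| = p^k$ with $k \geq 1$, $G$ certainly contains a fixpoint-free permutation, so by the second part of Lemma~\ref{lemma:p-Sylow}, $\Sigma_G$ is equivalent to $\Sigma_{\Z_p}$. This is the base step: for a $p$-group the claimed join is just $\Sigma_{\Z_p}$ (only one prime), matching the statement.

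For the inductive step, write $G \cong N \times K$ with $N = P_1$ the Sylow $p_1$-subgroup (regarded as a regular permutation group on itself) and $K \cong P_2 \times \cdots \times P_r$ the product of the remaining Sylow subgroups (also nilpotent, also regular on itself). Since $N$ is normal in $G$ with quotient $K$, and $G \cong N \times K$ as abstract groups, the second part of Lemma~\ref{lemma:regular} applies and gives $\Sigma_G = \Sigma_N \lor \Sigma_K$. By the $p$-group case, $\Sigma_N$ is equivalent to $\Sigma_{\Z_{p_1}}$; by the induction hypothesis applied to $K$, $\Sigma_K$ is equivalent to the join of $\Sigma_{\Z_{p_2}}, \ldots, \Sigma_{\Z_{p_r}}$. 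Combining, $\Sigma_G$ is equivalent to $\Sigma_{\Z_{p_1}} \lor \cdots \lor \Sigma_{\Z_{p_r}}$, as desired. (Associativity and well-definedness of the finite join in the interpretability lattice are free, since it is a complete lattice.)

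The only points requiring a little care — and the closest thing to an obstacle — are bookkeeping ones: that the Sylow decomposition of a nilpotent group is an internal direct product (so the abstract-isomorphism hypothesis of Lemma~\ref{lemma:regular} is genuinely met at each stage), that the ``regular action on itself'' of each factor is the one being fed into Lemma~\ref{lemma:regular}, and that a product of nilpotent groups is nilpotent so the induction hypothesis is applicable to $K$. None of these is deep; they are all standard facts from finite group theory. I would state the nilpotent-implies-product-of-Sylows fact with a citation (e.g. to \cite{rotman-grouptheory}) and otherwise keep the argument to the few lines above.
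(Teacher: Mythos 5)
Your proposal is correct and follows essentially the same route as the paper: decompose the regular nilpotent group into its Sylow subgroups, apply Lemma~\ref{lemma:regular} to reduce $\Sigma_G$ to the join of the $\Sigma_{P_i}$, and then invoke Lemma~\ref{lemma:p-Sylow} (using that every non-identity element of a regular permutation group is fixpoint-free) to identify each $\Sigma_{P_i}$ with $\Sigma_{\Z_{p_i}}$. The only difference is that you spell out the induction extending Lemma~\ref{lemma:regular} from two direct factors to $r$ factors, a step the paper leaves implicit.
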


\begin{proof}
Since $G$ is nilpotent, it is isomophic to the direct product of its Sylow groups; denote these by $G_{p_1}, G_{p_2},\ldots,G_{p_n}$. By Lemma~\ref{lemma:regular}, $\Sigma_G$ is therefore equivalent to the join of $\Sigma_{G_{p_1}},\ldots,\Sigma_{G_{p_n}}$, where all $G_{p_i}$ are considered as regular permutation groups. In regular permutation groups any nonzero element has no fixpoints. Therefore, by Lemma~\ref{lemma:p-Sylow}, each $\Sigma_{G_{p_i}}$ is equivalent to $\Sigma_{\Z_{p_i}}$, which concludes the proof.
\end{proof}

\section{The local-global property for cyclic loop conditions} \label{sect:example} 

In this section we prove that $\Sigma_G$ has the $n$-local-global property if $G = G_1 \times G_2 \times \cdots \times G_n \leq \Sym(G_1 \cup G_2 \cup \cdots \cup G_n)$ is a direct product of $n$ regular permutation groups.  For regular groups ($n=1$) our proof is based on a simple induction argument that can be best illustrated for cyclic terms:

\begin{example} \label{example:cyclic}
Let $\algA$ be an algebra and let $\overline a\in A^n$. Then $\algA$ satisfies $\Sigma_{\Z_n}$ locally on $\{ \overline a \}$ if there is a constant tuple $(c,c,\ldots,c) \in A^n$ and a term $t \in \Clo(\algA)$ such that 
\[
\begin{pmatrix}
c\\
c\\
\vdots\\
c\\
\end{pmatrix} =  t \begin{pmatrix}
a_1& a_2 &\dots & a_n\\
a_2& a_3 &\dots & a_1\\
\vdots & \vdots &\ddots & \vdots\\
a_n& a_1 &\dots & a_{n-1}\\
\end{pmatrix} = t(M_{\overline a}),
\]
where $M_{\overline a} \in A^{n\times n}$ denotes the matrix whose $(i+1)$-th row is the $i$-th cyclic shift of $\overline a$.

Assume $\algA$ satisfies $\Sigma_{\Z_n}$ locally on all 1-element subsets $\{ \overline a \} \subseteq A^n$. We are then going to show by induction on $k=1,2,\ldots$ that $\algA$ satisfies $\Sigma_{\Z_n}$ also locally on every subset $F \subseteq A^n$ of size $|F|=k$. This will imply that $\Sigma_{\Z_n}$ has the 1-local-global-property.

For $k =1 $ this is trivial. For an induction step $k \to k+1$, let us assume that there is a term $t(x_1,\ldots,x_n)$ that satisfies $\Sigma_{\Z_n}$ on a set $F = \{\overline a^{(1)}, \ldots \overline a^{(k)}\}$. In other words, for every matrix $M_{\overline a^{(i)}}$, there is a constant tuple $\overline c^{(i)} \in A^n$, such that $t(M_{\overline a^{(i)}}) = \overline c^{(i)}$. Our goal is then to construct a term that satisfies $\Sigma_{\Z_n}$ on $F\cup\{\overline b\}$ where $\overline b \in A^n$ is a new tuple.

Let us define $d_i = t(b_i,b_{i+1},\ldots,b_{i-1})$ for all $i = 1,\ldots,n$ and $\overline d = (d_1,\ldots,d_n)$. By our assumptions on $\algA$, there is a term $s \in \Clo(\algA)$ that is cyclic on $\{\overline d\}$, i.e., $s(M_{\overline d}) = (e,e,\ldots,e)$ for some constant $e$. The middle columns in Figure~\ref{fig:matrix} show the images of $M_{\overline a_{(i)}}$ and $M_{\overline b}$ under $t(x_1,x_2,\ldots,x_n)$ and its cyclic shifts, respectively. It is straightforward to see that the term
$$u(x_1,x_2,\ldots,x_n) = s(t(x_1,x_2,\ldots,x_n),t(x_2,x_3,\ldots,x_1), \ldots, t(x_n,x_1,\ldots,x_{n-1})),$$
maps each matrix $M_{\overline a^{(i)}}$ to the constant tuple $s(\overline c^{(i)},\ldots,\overline c^{(i)})$, and $M_{\overline b}$ to $(e,e,\ldots,e)$ (cf. the last column of Figure~\ref{fig:matrix}). Thus $u$ is a cyclic term on $F \cup \{\overline b \}$, which is what we wanted to prove.

\begin{figure}[t]
\FIG{$
\begin{array}{|cccc|cccc|c|}
\hline
 &   &  &   & y_1 & y_2  & & y_n & s(y_1,\ldots,y_n) \\
x_1 & x_2 & \cdots & x_n & t(x_1,\ldots, x_n) & t(x_2,\ldots,x_1) & \cdots & t(x_n,\ldots,x_{n-1}) &  \\
\hline
\multicolumn{4}{|c|}{M_{\overline a^{(1)}}} & \overline c^{(1)} & \overline c^{(1)} & \cdots & \overline c^{(1)} & s(\overline c^{(1)},\ldots,\overline c^{(1)})\\
\multicolumn{4}{|c|}{M_{\overline a^{(2)}}} & \overline c^{(2)} & \overline c^{(2)} & \cdots & \overline c^{(2)} & s(\overline c^{(2)},\ldots,\overline c^{(2)})\\
\multicolumn{4}{|c|}{\vdots} & \vdots & \vdots &  & \vdots & \vdots\\
\multicolumn{4}{|c|}{M_{\overline a^{(k)}}} & \overline c^{(k)} & \overline c^{(k)} & \cdots & \overline c^{(k)} & s(\overline c^{(k)},\ldots,\overline c^{(k)})\\
\hline
b_1& b_2 &\dots & b_n & d_1 & d_2 & \cdots & d_n & e \\
b_2& b_3 &\dots & b_1 & d_2 & d_3 & \cdots & d_1 & e \\
\vdots & \vdots  &  & \vdots  & \vdots  & \vdots  &  & \vdots  & \vdots \\
b_n& b_1 &\dots & d_{n-1} & d_n & d_1 & \cdots & d_{n-1} & e \\
\hline
\end{array}
$}{\caption{Construction of a local $n$-cyclic term on $F \cup \{\overline b\}$}}
\label{fig:matrix}
\end{figure}
\end{example}

In Lemma \ref{lemma:lg-regular} we continue by generalizing Example \ref{example:cyclic} to all regular permutation groups. For a more compact presentation we refrain from illustrating the proof of Lemma \ref{lemma:lg-regular} (and subsequently Theorem \ref{theorem:prod-regular}) by matrices as in Figure \ref{fig:matrix}, but we invite the reader to keep similar pictures in mind.

\begin{lemma} \label{lemma:lg-regular}
Let $G \leq \Sym(G)$ be a regular permutation group. Then $\Sigma_G$ has the $1$-local-global property.\end{lemma}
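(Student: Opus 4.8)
The plan is to mimic Example~\ref{example:cyclic} verbatim, replacing the cyclic shifts by the left-translation action of the regular group $G$, and performing the same induction on $|F|$. Concretely, for a tuple $\overline a \in A^G$ (we index coordinates of tuples by elements of $G$, as permitted by the conventions of Section~\ref{sect:background}) define the matrix $M_{\overline a} \in A^{G \times G}$ whose $\pi$-th row is $\overline a^{\pi} = (a_{\pi\phi})_{\phi \in G}$. Then a term $t$ satisfies $\Sigma_G$ on $\{\overline a\}$ precisely when $t(M_{\overline a})$ is a constant tuple: indeed the $\rho$-th coordinate of $t(M_{\overline a})$ is $t((a_{\pi\rho})_{\pi \in G})$ — wait, one should be a little careful about rows versus left/right translation — the point is simply that $t$ being $G$-invariant means $t^{\sigma}(\overline a) = t(\overline a)$ for all $\sigma \in G$, and the entries of $t(M_{\overline a})$ are exactly the values $t(\overline a^{\sigma})$ for $\sigma$ ranging over $G$, so constancy of this tuple is equivalent to local satisfaction of $\Sigma_G$ at $\overline a$. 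This is the analogue of the displayed equation in Example~\ref{example:cyclic}, and it is the only place where regularity of $G$ enters essentially: it guarantees that the rows of $M_{\overline a}$ are indexed exactly by $G$ with no repetition, so that ``$t(M_{\overline a})$ constant'' captures all of $\Sigma_G$ and nothing extraneous.

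Next I would set up the induction. The base case $|F| = 1$ is the hypothesis. For the step, suppose $t$ is a term witnessing $\Sigma_G$ on $F = \{\overline a^{(1)}, \dots, \overline a^{(k)}\}$, so $t(M_{\overline a^{(i)}}) = \overline c^{(i)}$ is constant for each $i$, and let $\overline b \in A^G$ be a new tuple. Define $\overline d \in A^G$ by $d_\pi = t(\overline b^{\pi})$ — that is, $\overline d = t(M_{\overline b})$, the image of the new tuple's matrix under $t$. Apply the local hypothesis to $\{\overline d\}$ to get a term $s$ with $s(M_{\overline d})$ constant, say equal to $(e, e, \dots, e)$. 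Now define
\[
u((x_\pi)_{\pi \in G}) = s\bigl( (t((x_\pi)_{\pi \in G})^{\sigma})_{\sigma \in G} \bigr),
\]
i.e., $u$ feeds the $|G|$ ``$\sigma$-shifts'' of $t$ into $s$. One checks that $u$ is a $G$-term on $F \cup \{\overline b\}$: on each $M_{\overline a^{(i)}}$ the inner block produces the constant tuple $(\overline c^{(i)}, \dots, \overline c^{(i)})$ in each coordinate, hence $u(M_{\overline a^{(i)}}) = s(\overline c^{(i)}, \dots, \overline c^{(i)})$, which is constant; and on $M_{\overline b}$ the inner block reproduces (shifts of) $\overline d$ in the pattern of $M_{\overline d}$, so $u(M_{\overline b}) = s(M_{\overline d}) = (e, \dots, e)$, constant. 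This is exactly the bookkeeping of Figure~\ref{fig:matrix} with ``cyclic shift'' replaced by ``left translation by $\sigma$''.

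The one genuinely fiddly point — and the step I expect to need the most care — is verifying that the composite $u$ really is $G$-invariant and that the inner matrix, when restricted to the rows coming from $M_{\overline b}$, is literally $M_{\overline d}$ up to the relabeling of rows/columns by $G$. This requires keeping straight which side the group acts on: the rows of $M_{\overline a}$ are translates $\overline a^{\pi}$, applying the shift $t \mapsto t^{\sigma}$ corresponds to post-composing the indexing map, and the identity $t^{\sigma\tau}(\overline a) = t^{\tau}(\overline a^\sigma) = t(\overline a^{\sigma\tau})$ from Section~\ref{sect:background} is what makes the two ways of composing group elements match up. Once that associativity bookkeeping is pinned down, $G$-invariance of $u$ follows because $u^{\sigma}$ merely permutes the arguments of $s$ among the coordinates $\{t^{\tau} : \tau \in G\}$ — precisely because $G$ acting on itself by left translation permutes $G$ — and then one invokes nothing about $s$ beyond that it was built to kill $M_{\overline d}$; invariance is structural. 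I would present the argument in the index-free ``matrix'' style but, as the paper suggests, spell out enough of the index chase (via the $t^{\alpha\beta}(\overline a) = t(\overline a^{\alpha\beta})$ identities) to make the row/column relabeling unambiguous. Everything else is routine substitution, so I would keep it brief and refer back to Figure~\ref{fig:matrix} for intuition.
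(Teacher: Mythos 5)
Your proposal is correct and is essentially the same argument as the paper's proof: you define the auxiliary tuple $\overline d$ with $d_\pi = t^\pi(\overline b)$, invoke the 1-local hypothesis on $\{\overline d\}$ to get $s$, form $u = s\bigl((t^\sigma)_{\sigma\in G}\bigr)$, and verify the $G$-invariance of $u$ on $F\cup\{\overline b\}$. The only difference is cosmetic — you carry out the bookkeeping in matrix form (mirroring Example~\ref{example:cyclic}) whereas the paper's proof chases indices via the identities $t^{\psi\pi}(\overline b)=t^\pi(\overline b^\psi)$ directly; in your displayed definition of $u$ the shift should be written $t^\sigma((x_\pi)_{\pi\in G})$ rather than $t((x_\pi)_{\pi\in G})^\sigma$, but the surrounding text makes the intent clear.
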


\begin{proof}
Let $\algA$ be an algebra. Recall that a term $t((x_\pi)_{\pi \in G}) \in \Clo(\algA)$ is a $G$-term on a set $F\subseteq A^G$ if it satisfies the equations $t((x_\pi)_{\pi \in G}) = t^{\psi}((x_\pi)_{\pi \in G}) = t((x_{\psi \circ \pi})_{ \pi \in G})$ for all $\psi \in G$ and all $\overline x \in F$. Let us assume that $\algA$ has $G$-terms on all subsets of $A^G$ of size 1. We then show by induction on $|F|$ that $\algA$ has $G$-terms on all finite subsets $F \subseteq A^G$. 

For the induction step, let $t$ be a $G$-term on $F \subseteq A^G$; our goal is to construct a $G$-term on a 1-element extension $F \cup \{ \overline b\}$ of $F$ where $\overline b\in A^G$ is a new element. Let us define the tuple $\overline d = (d_\pi)_{\pi \in G} \in A^{G}$ by $d_\pi = t^\pi(\overline b)$ for every $\pi \in G$. By our assumptions on $\algA$, there is a term $s \in \Clo(\algA)$ such that $s(\overline d ) = s^{\psi}(\overline d)$, for every $\psi \in G$. We then define $u(\overline x) = s((t^{\pi}(\overline x))_{\pi \in G})$.

By our hypothesis on $F$, for each $\overline a\in F$ there is a constant tuple $c_{\overline a}$ such that $t^{\psi}(\overline a) = c_{\overline a}$ for all $\psi \in G$. This implies that $u^{\psi}(\overline a) = s( c_{\overline a},\ldots, c_{\overline a}) = u(\overline a)$ for all $\psi \in G$, hence $u$ is a $G$-term on $F$. For the new tuple $\overline b$ and any $\psi \in G$ we have
\begin{align*}
u^\psi(\overline b) = u(\overline b^\psi) &= s((t^{\pi}(\overline b^{\psi})_{\pi \in G}) = s((t^{\psi \circ \pi}(\overline b))_{\pi \in G}) \\
&= s((d_{\psi\circ \pi})_{\pi \in G}) =s(\overline d^\psi) = s^{\psi}(\overline d ) = s(\overline d) = u(\overline b).
\end{align*}
Thus $u$ is a $G$-term on $F \cup \{ \overline b\}$. This finishes the proof of the induction step, hence $\Sigma_G$ has the local-global property.

\end{proof}

As a direct corollary we get:

\begin{corollary} \
\begin{enumerate}
\item  $\Decide(\Sigma_G)$ can be decided in time $\mathcal O(\arity(\algA) \| \algA \|^{2|G|})$ for every regular group $G \leq \Sym(G)$.  
\item For a fixed $n\in \N$, the existence of an $n$-ary cyclic term, $\Decide(\Sigma_{\Z_n})$, can be decided in time $\mathcal O(\arity(\algA) \| \algA \|^{2 p})$, where $p$ is the largest prime divisor of $n$.
\end{enumerate}
\end{corollary}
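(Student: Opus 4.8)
The plan is to read off both statements directly from Lemma~\ref{lemma:lg-regular} together with the complexity bound in Lemma~\ref{lemma:local-global}. For part~(1): $\Sigma_G$ is a strong linear Maltsev condition (in fact height~1), so Lemma~\ref{lemma:local-global} applies once we have the $k$-local-global property. By Lemma~\ref{lemma:lg-regular}, $\Sigma_G$ has the $1$-local-global property, i.e.\ we may take $k=1$. It remains only to identify the parameters $n$ (number of variables) and $m$ (number of distinct minors) appearing in the bound $\mathcal O(\arity(\algA)\,\|\algA\|^{k(m+n)})$. A $G$-term has $|G|$ variables (we index the variables of $t$ by the elements of $G$, since $G\leq\Sym(G)$ is regular), so $n=|G|$. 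The defining identities of $\Sigma_G$ are $t\approx t^\pi$ for $\pi\in G$; the minors occurring are $t^{\mathrm{id}}=t$ and $t^\pi$ for $\pi\in G$, which is just $|G|$ distinct minors in total (one for each group element, since $t^{\mathrm{id}}$ is among them). Hence $m=|G|$ and $k(m+n)=1\cdot(|G|+|G|)=2|G|$, giving the claimed running time $\mathcal O(\arity(\algA)\,\|\algA\|^{2|G|})$.

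For part~(2): by the example in Section~\ref{sect:Gterms}, an $n$-ary cyclic term is precisely a $\Z_n$-term, where $\Z_n\leq\Sym(n)$ acts by the cyclic shift; this is a regular permutation group (each nonidentity element is a product of cycles of equal length with no fixed point, and $|\Z_n|=n$ equals the size of the set it acts on). One could apply part~(1) directly to get a bound with exponent $2n$, but a better bound follows by first reducing to a prime. Let $p$ be the largest prime divisor of $n$. By Theorem~\ref{theorem:loop-condition}(1) (or the discussion following Lemma~\ref{lemma:G-terms} on the prime decomposition, or Lemma~\ref{lemma:p-Sylow}), $\Sigma_{\Z_n}$ is equivalent to the join $\Sigma_{\Z_{p_1}}\lor\cdots\lor\Sigma_{\Z_{p_r}}$ over the prime divisors $p_1,\dots,p_r$ of $n$. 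An algebra satisfies a finite join of Maltsev conditions iff it satisfies each joinand (the join in the interpretability lattice is the condition whose models are exactly the common models), so $\algA\models\Sigma_{\Z_n}$ iff $\algA\models\Sigma_{\Z_{p_i}}$ for every $i$. Each $\Sigma_{\Z_{p_i}}$ can be decided, by part~(1) applied to the regular group $\Z_{p_i}$, in time $\mathcal O(\arity(\algA)\,\|\algA\|^{2p_i})$, and since $p_i\leq p$ this is $\mathcal O(\arity(\algA)\,\|\algA\|^{2p})$. Running these $r\leq\log_2 n$ tests in sequence still takes time $\mathcal O(\arity(\algA)\,\|\algA\|^{2p})$ (the number $r$ of primes is absorbed into the constant, or contributes only a logarithmic factor in $\|\algA\|$ which is dominated since $p\geq 2$), which is the claimed bound.

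The only genuinely non-routine point is making sure the reduction in part~(2) via the join is legitimate at the level of \emph{deciding} rather than just interpretability: one must observe that $\Decide(\Sigma_1\lor\Sigma_2)$ reduces to solving $\Decide(\Sigma_1)$ and $\Decide(\Sigma_2)$ independently, which is immediate from the characterisation of the join as ``holds in $\algA$ iff both hold in $\algA$''. Everything else is bookkeeping: counting variables and minors of $\Sigma_G$ to plug into Lemma~\ref{lemma:local-global}, and checking that $\Z_n$ and $\Z_p$ are regular so that Lemma~\ref{lemma:lg-regular} is applicable. I do not anticipate any real obstacle; the proof is a short assembly of already-established facts.
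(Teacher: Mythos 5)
Your proof is correct and essentially the same as the paper's: both parts are obtained by plugging Lemma~\ref{lemma:lg-regular} into the complexity bound of Lemma~\ref{lemma:local-global}, and the sharpened exponent $2p$ in part~(2) comes from decomposing $\Sigma_{\Z_n}$ into the join of $\Sigma_{\Z_{p_i}}$ over the prime divisors (the paper cites Corollary~\ref{cor:product} for this, which is one of the equivalent sources you list). You spell out the bookkeeping $m=n=|G|$ and the ``join $=$ conjunction of conditions'' step more explicitly than the paper does, but the argument is the same.
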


\begin{proof}
The result about regular groups $G$ follows immediately from Lemmas~\ref{lemma:local-global} and \ref{lemma:lg-regular}.

For an $n$-ary cyclic term, we know from Corollary~\ref{cor:product} that $\Sigma_{\Z_n}$ is equivalent to the satisfaction of all of $\Sigma_{\Z_{p_1}}, \ldots, \Sigma_{\Z_{p_k}}$, where $p_1,\ldots,p_k$ are the prime divisors of $n$. Since all groups $\Z_{p_i}$ are regular , $\Decide(\Sigma_{\Z_{p_i}})$ can be tested in time $\mathcal O( \arity(\algA) \| \algA \|^{2 p_i})$. Since $k$ is a constant, 
$\mathcal O(\sum_{i=1}^k \arity(\algA) \| \algA \|^{2 p_i})=\mathcal O(\arity(\algA) \| \algA \|^{2 p})$, giving us the result.
\end{proof}

We next generalise Lemma \ref{lemma:lg-regular} to direct products of regular groups. Although the core idea is the same,  the proof is much more technical, as it involves a nested induction with three layers where the innermost layer is proved by an induction-like contradiction.

\begin{theorem} \label{theorem:prod-regular}
Let $G_1, \ldots, G_n$ be regular permutation groups and let 
\[
G = G_1 \times \cdots \times G_n \leq \Sym(G_1 \cup \cdots \cup G_n)
\] be their direct product with the natural action on the disjoint union $G_1 \cup \cdots \cup G_n$. Then $\Sigma_G$ has the $n$-local-global property.
\end{theorem}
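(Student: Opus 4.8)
The plan is to prove the statement by induction on $|F|$, where $F$ ranges over finite subsets of $A^X$ with $X = G_1 \dot\cup \cdots \dot\cup G_n$; the base case $|F| \le n$ is the hypothesis (for $|A|^{|X|} < n$ one argues directly on $F = A^X$). So assume $\algA$ has a $G$-term on every subset of $A^X$ of size at most $k$, fix a subset $F \cup \{\overline b\}$ of size $k+1$ with $\overline b \notin F$, and fix a $G$-term $t$ on $F$. Since any $G$-term $t'$ satisfies $t'(\overline b) = t'(\overline b^\psi)$ for all $\psi \in G$, being a $G$-term on $F \cup \{\overline b\}$ is the same as being a $G$-term on $F$ that is constant on the orbit $O = \{\overline b^\psi : \psi \in G\}$, and the first job is to upgrade $t$ to such a term.

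The second layer is to process the factors $G_1, \ldots, G_n$ one at a time by an \emph{orbit-nesting trick}. Write $P_j = G_1 \times \cdots \times G_j$ and $Q_j = G_{j+1} \times \cdots \times G_n$, viewed as commuting subgroups of $G$ with $G = P_j \cdot Q_j$, and set $O^{Q_j} = \{\overline b^\psi : \psi \in Q_j\}$, so $O = O^{Q_0} \supseteq O^{Q_1} \supseteq \cdots \supseteq O^{Q_n} = \{\overline b\}$. By induction on $j = 0, 1, \ldots, n$ I would construct a term $t_j$ which is a $G$-term on $F$ \emph{and} is $P_j$-invariant on $O^{Q_j}$ (that is $t_j(\overline s) = t_j(\overline s^\psi)$ for all $\overline s \in O^{Q_j}$ and $\psi \in P_j$), starting from $t_0 = t$; then $t_n$ is a $G$-term on $F$ that is constant on $O$, which closes the induction on $|F|$. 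The step $j-1 \to j$ uses the elementary fact that, because $O^{Q_j}$ is $Q_j$-closed and elements of $Q_j$ can be pushed past elements of $G_j$, a term that is $P_{j-1}$-invariant on $O^{Q_{j-1}}$ and $G_j$-invariant on $O^{Q_j}$ is automatically $P_j$-invariant on $O^{Q_j}$. Hence step $j$ reduces to: from $t_{j-1}$ produce a term that is still a $G$-term on $F$, still $P_{j-1}$-invariant on $O^{Q_{j-1}}$, and in addition $G_j$-invariant on all of $O^{Q_j}$.

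The third layer enforces this $G_j$-invariance on $O^{Q_j}$ by enumerating $O^{Q_j}$ and absorbing its elements one by one, in the style of Lemma~\ref{lemma:lg-regular}: to make the current term $u$ also $G_j$-invariant at a new $\overline s \in O^{Q_j}$ one sets $\overline d = (u^\rho(\overline s))_{\rho \in G_j} \in A^{G_j}$ and replaces $u$ by $p((u^\rho(\overline x))_{\rho \in G_j})$, where $p$ is a $G_j$-term on the one-element set $\{\overline d\}$. Exactly as in Lemma~\ref{lemma:lg-regular}, composing with the $\rho$-shifts of $u$ preserves the $G$-invariance on $F$ and the $P_{j-1}$-invariance on $O^{Q_{j-1}}$ (the relevant composite shifts $\psi\rho$ again lie in $G$, respectively in $Q_{j-1}$), adds $G_j$-invariance at $\overline s$, and does not disturb it at the elements of $O^{Q_j}$ treated earlier. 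The real difficulty is that a priori we only possess $G$-terms on small subsets of $A^X$, not $G_j$-terms on subsets of $A^{G_j}$, and — as the coprime example $G_1 = \Z_2$, $G_2 = \Z_3$ shows — one cannot in general produce the required $p$ merely by ``folding'' a $G$-term along a map $X \to G_j$. This is precisely where the full $n$-element local hypothesis is needed: the plan is to obtain $p$ as a minor $s^\gamma$, for a suitable $\gamma \colon X \to G_j$, of a $G$-term $s$ living on an $n$-element subset of $A^X$ assembled from the blocks of $\overline b$ (roughly one auxiliary tuple per factor $G_i$), chosen so that the identities ``$s$ is a $G$-term on that set'' are exactly strong enough to force $p(\overline d) = p(\overline d^\rho)$ for all $\rho \in G_j$ at the tuples $\overline d$ that actually arise. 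Because those $\overline d$ depend on the order in which the elements of $O^{Q_j}$ have already been absorbed, there is no way to fix the bookkeeping uniformly in advance; I expect this innermost step to be run not as a clean induction but by contradiction — taking a minimal configuration in which the construction gets stuck and deriving a contradiction, the ``induction-like contradiction'' of the outline. This extraction of a genuinely $G_j$-invariant minor from an $n$-element local $G$-term is the step I expect to be the main obstacle; the two outer inductions and the orbit-nesting reduction are comparatively routine.
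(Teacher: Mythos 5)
Your high-level picture (three nested layers, orbit-by-orbit nesting of the factors $G_j$, and an ``induction-like contradiction'' at the core) matches the structure of the paper's proof, and the elementary commutation fact you use to pass from ``$P_{j-1}$-invariant on $O^{Q_{j-1}}$ and $G_j$-invariant on $O^{Q_j}$'' to ``$P_j$-invariant on $O^{Q_j}$'' is correct. However, the proposal has a genuine gap precisely at the step you flag as the main obstacle, and the reason is that your \emph{outermost} induction is not fine-grained enough to feed the innermost one.

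You induct on $|F|$ with base case $|F|\le n$; the only auxiliary terms you then legitimately have access to when processing $\overline b$ are $G$-terms on sets of size at most $|F|$ (outer inductive hypothesis) or at most $n$ (base hypothesis). Your plan is to obtain the required ``$G_j$-invariant'' ingredient $p$ as a minor $s^\gamma$ of a $G$-term $s$ living on a clever $n$-element set. As you already observe, the coprime example $G_1=\Z_2$, $G_2=\Z_3$ shows that folding a $G$-term along a map $\gamma\colon X\to G_j$ cannot in general yield a $G_j$-term, and nothing in your outline overcomes this. The paper does not attempt to manufacture a $G_j$-term at all. Instead it sets up a downward induction on a parameter $m=n,n-1,\dots,0$ via statements $S(m)$ that already quantify over an \emph{arbitrary} finite set $M\subseteq O_{\{i_1,\dots,i_m\}}$ together with a structured ``seed'' $F$ of size exactly $m$ (with $\overline a_j\in O_{\{i_1,\dots,i_j\}}$). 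Inside the step $S(m+1)\Rightarrow S(m)$ there is a middle induction on $|M|$ and an inner maximality/contradiction argument on $\Phi_t$, and the crucial auxiliary operation $t'$ is a full $G$-term obtained by invoking $S(m+1)$ on a set $F'\cup M'$ whose size is controlled by $|M|$ and $|\Phi_t|$ and is generally far larger than $n$. The new term is then $s(\overline x)=t'(\overline x^{(1)},\dots,\overline x^{(j-1)},(t^\pi(\overline x))_{\pi\in G_j},\overline x^{(j+1)},\dots,\overline x^{(n)})$, i.e.\ $t'$ composed with $G_j$-minors of $t$, not a folded minor $s^\gamma$.

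So the concrete missing idea is the correct shape of the outer induction: you need statements that simultaneously fix the ``triangular'' seed set of size $m$ and quantify over all finite $M$ in the corresponding $O_{\{i_1,\dots,i_m\}}$. Without this, the auxiliary term required at the inner step is not available from your base hypothesis, since the relevant set can have far more than $n$ elements. Once the statements $S(m)$ are in place, your inner contradiction on $\Phi_t$ can be carried out essentially as you sketched, and the use of the coprime obstruction goes away because one never needs a genuine $G_j$-term, only a $G$-term on a suitably structured set.
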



\begin{proof}

Let $\algA$ be an algebra with the universe $A$. Given a tuple $\overline a \in A^{G_1 \cup \cdots \cup G_n}$, we will denote by $\overline a^{(i)} \in A^{G_i}$ its projections to the coordinates labelled by $G_i$. For any subset $I \subseteq [n]$, let us define 
\[
O_I = \{ \overline a = (\overline a^{(1)},\ldots, \overline a^{(n)}) \in A^{G_1 \cup \cdots \cup G_n} \, \colon \, \forall i \in I,\, \overline a^{(i)} \text{ is constant}\}. 
\]

For any $m=0,1,\ldots,n$, let $S(m)$ be the following statement: \\

$S(m)$: ``For every sequence of pairwise different indices $\overline i = (i_1,i_2,\ldots,i_{m}) \in [n]^m$, for every set $F = \{ \overline a_0, \overline a_1, \overline a_2, \ldots, \overline a_{m-1} \} \subseteq A^{G_1 \cup \cdots \cup G_n}$ such that $\overline a_j \in O_{\{i_1,i_2,\ldots,i_j\}}$ and for every finite subset $M \subseteq O_{\{i_1,i_2,\ldots,i_m\}}$, there is term $t \in \Clo(\algA)$ that is a $G$-term on $F \cup M$.''\\

Note that $\algA \models S(n)$ if and only if $\algA$ satisfies $\Sigma_G$ locally on all sets of the form $F = \{\overline a_0 \ldots, \overline a_{n-1} \} \subseteq A^{G_1 \cup \cdots \cup G_n}$ with $\overline a_j \in O_{\{i_1,i_2,\ldots,i_j\}}$. If in particular $\algA$ satisfies $\Sigma_G$ locally on all sets $F$ of size $n$, then $S(n)$ holds. On the other hand $\algA \models S(0)$ states that $\algA$ has a $G$-term on \emph{all} finite subsets $M \subseteq O_\emptyset =  A^{G_1 \cup \cdots \cup G_n}$. Thus if we can prove that $\algA \models S(n)$ implies $\algA \models S(0)$, it will follow that $\Sigma_G$ has the $n$-local-global property and we will be done.

So let us assume that $\algA \models S(n)$. We are going to prove by induction that then also $\algA \models S(m)$ for all $m = n,n-1,\ldots,0$. For $m = n$ this is trivial. So let us consider the induction step $m+1 \to m$.

Assume without loss of generality that $\overline i = (1,2,\ldots,m)$ (otherwise we reorder the direct factors of $G$). Thus our goal is to prove that for every set $F = \{\overline a_0, \overline a_1, \ldots, \overline a_{m-1}\}$ such that $\overline a_i \in O_{[i]}$ and every finite $M \subseteq O_{[m]}$ there is a $G$-term $f$ on $F \cup M$. We will prove this by induction on $|M|$.

For $M = \emptyset$ this is true since $\algA \models S(n)$. For an induction step, we are going to show that whenever there is a $G$-term $t$ on $F \cup M$ for a finite set $M \subseteq O_{[m]}$, then there is also a $G$-term on $F \cup M \cup \{ \overline b\}$ for a new tuple $\overline b \in O_{[m]}$. In order to prove this, let $T(j)$ be the following statement for $j \in \{1,2,\ldots,n\}$ (and for fixed $F,M,\overline b$):\\

$T(j)$: ``There is a $t \in \Clo(\algA)$, which is a $G$-term on $F\cup M$ and additionally satisfies $t^{\alpha\beta}(\overline b) = t^\beta(\overline b)$ for all $\alpha \in G_1 \times G_2 \times \cdots \times G_j$ and all $\beta \in G_{j+1}$.''\\

For $j=n$ we interpret the above equation as $t^{\alpha}(\overline b) = t(\overline b)$, for all $\alpha \in G_1 \times \cdots \times G_n$; so $T(n)$ simply states that $t$ is a $G$-term on $F\cup M \cup \{\overline b \}$.

Note for future reference that if $t$ is a witness for $T(j)$ and $\theta\in G_{j+1}$ then $t^\theta$ is also a witness for $T(j)$: First, $t^\theta$ is a $G$-term since $t$ is a $G$-term and $\theta\in G$. Second, 
\[
(t^\theta)^{\alpha\beta}(\overline b)=t^{\alpha\beta\theta}(\overline b)=
t^{\alpha(\beta\theta)}(\overline b)=
t^{\beta\theta}(\overline b)=(t^\theta)^\beta(\overline b),
\]
where in the middle of the chain of equalities we used $T(j)$ on $t$ since $\beta\theta\in G_{j+1}$.

Note that $\algA \models T(j)$ for $j = 0,1,\ldots,m$ since $\overline b \in O_{[m]}$. We are going to show by induction on $j = m,m+1,\ldots,n$ that $\algA \models T(j)$ for all $j$. (This is the third level of induction in our proof. Since $\algA \models T(n)$ is equivalent with $\algA$ having a $G$-term on $F\cup M \cup \{\overline b\}$, once we finish the induction on the third level, we will be able to increase $|M|$, which will give us the induction step on the second level of induction.)

For the induction step $T(j-1)\Rightarrow T(j)$ we first define 
\[
\Phi_t = \{ \beta \in G_{j+1} \colon t^{\alpha\beta}(\overline b) = t^\beta(\overline b) \text{ for all } \alpha \in G_1 \times G_2 \times \cdots \times G_j \}.
\]

Choose a $t$ from among the terms that witness $T(j-1)$ so that  $\Phi_t$ is of maximal cardinality. Clearly, if $\Phi_t = G_{j+1}$, then $T(j)$ holds, and we are done.

Suppose for a contradiction that there is a $\theta \in G_{j+1} \setminus \Phi_t$. We can assume without loss of generality that $\theta = id$ is the identity; otherwise we take $t^{\theta}$ and $\Phi_t \circ \theta^{-1}$ instead of $t$ and $\Phi_t$.  

For any tuple $\overline c \in A^{G_1 \cup \cdots \cup G_n}$, let for short $\overline c'  \in A^{G_1 \cup \cdots \cup G_n}$ denote the tuple defined by
\[
\overline c_\pi' = \begin{cases} t^{\pi}(\overline c) \text{ if } \pi \in G_j,\\
c_\pi \text{ else.}
\end{cases}
\]
Since $t$ is a $G$-term on $F$, we get $\overline a_i' \in O_{[i] \cup \{j\}}$ for every $\overline a_i \in F$. Similarly, $t$ is a $G$-term on $M$, so $\overline d' \in O_{[m] \cup \{j\}}$ for every $\overline d \in M$. 

Furthermore, for all $\alpha \in G_1 \times \cdots \times G_j$, all $\pi\in G_j$, and all $\beta \in \Phi_t$ we have (note that $\beta$ commutes with $\pi$ and that $\alpha\pi\in G_1\times\cdots\times G_j$):
\[t^\pi(\overline b^{\alpha \beta})
=t^{\alpha\beta\pi}(\overline b)=t^{(\alpha\pi)\beta}(\overline b)=t^{\beta}(\overline b).
\]
Thus the value of $t^\pi(\overline b^{\alpha \beta})$ does not depend on $\pi$ at all. Therefore,
$(\overline b^{\alpha \beta})' \in O_{[m] \cup \{j\}}$ 
for all $\alpha\in G_1 \times \cdots \times G_j$ and all $\beta\in \Phi_t$. 

We define $F' = \{ \overline b', \overline a_0',\overline a_{1}', \ldots, \overline a_m'\}$ and 
\[
M' = \{ \overline d' \colon \overline d \in M\} \cup \{ (\overline b^{\alpha \beta})' \colon \alpha \in G_1 \times \cdots \times G_j, \beta \in \Phi_t \}.
\]
By the above arguments, $M'\subseteq O_{[m]\cup \{j\}}$.
Recall that $\algA \models S(m+1)$ by the induction hypothesis for the outermost induction; applying $S(m+1)$ with the indices $\overline i =(j,1,2,\dots,m)$ and sets $F'$ and $M'$ we see that there is a term $t'$, which is a $G$-term on $F' \cup M'$.

We then define the term $s$ as follows:
$$s(\overline x) = t'(\overline x^{(1)},\ldots, \overline x^{(j-1)}, (t^{\pi}(\overline x))_{\pi \in G_j}, \overline x^{(j+1)}, \ldots, \overline x^{(n)}).$$

Observe that for any $\overline c\in A^{G_1 \cup \cdots \cup G_n}$ the value of $s(\overline c)$ equals $t'(\overline c')$. From this and the fact that $t'$ is a $G$-term on $F'\cup M'$, it follows that $s$  is a $G$-term on $F\cup M$. 

We now claim that $\Phi_s \supseteq \Phi_t \cup \{ \id \}$. Note that $\id \in \Phi_s$ implies in particular that $s$ satisfies $T(j-1)$. Thus, if we manage to prove this claim, it will contradict the maximality of $\Phi_t$.

In order to see that $\id \in \Phi_s$, let $\alpha \in G_1 \times \cdots \times G_{j}$ with the decomposition $\alpha = \gamma \delta$ into $\gamma \in G_1 \times \cdots \times G_{j-1}$ and $\delta \in G_j$. Then 
\begin{align}
s^{\alpha}(\overline b)&=s(\overline b^\alpha)=s(\overline b^{\gamma\delta})\\ &= t'( (\overline b^{(1)}, \ldots, \overline b^{(j-1)})^{\gamma}, (t^{\pi}(\overline b^{\gamma \delta}))_{\pi \in G_j}, \overline b^{(j+1)},\ldots, \overline b^{(n)} )\\
&= t'( (\overline b^{(1)}, \ldots, \overline b^{(j-1)})^{\gamma}, (t^{\gamma  \delta  \pi}(\overline b))_{\pi \in G_j}, \overline b^{(j+1)},\ldots, \overline b^{(n)})\\
&= t'( (\overline b^{(1)}, \ldots, \overline b^{(j-1)})^{\gamma}, (t^{\delta  \pi}(\overline b))_{\pi \in G_j}, \overline b^{(j+1)},\ldots, \overline b^{(n)}) \label{eq:prod-regular1} \\
&= t'((\overline b')^{\gamma  \delta})= t'^{\gamma  \delta}(\overline b') = t'(\overline b') = s(\overline b). \label{eq:prod-regular2}
\end{align}
In the above, equation (\ref{eq:prod-regular1}) holds since $t$ satisfies $T(j-1)$; the equations in line (\ref{eq:prod-regular2}) hold, since $t'$ is a $G$-term for $\overline b'$.

In order to see that $\Phi_s \supseteq \Phi_t$, let $\alpha \in G_1 \times \cdots \times G_{j}$, and $\beta \in \Phi_t$ (and hence $\beta\in G_{j+1}$). As above, let $\alpha = \gamma  \delta$ be the unique decomposition of $\alpha$ into $\gamma \in G_1 \times \cdots \times G_{j-1}$ and $\delta \in G_j$. Then (note that $\beta$ commutes with both $\pi$ and $\alpha$)
\begin{align}
s^{\alpha\beta}(\overline b)&=s(\overline b^{\beta\alpha}) \\
&= t'( (\overline b^{(1)}, \ldots, \overline b^{(j-1)})^{\gamma}, (t^{\pi}(\overline b^{\beta\alpha}))_{\pi \in G_j}, (\overline b^{(j+1)})^{\beta},\overline b^{(j+2)}, \ldots, \overline b^{(n)} )\\
&= t'( (\overline b^{(1)}, \ldots, \overline b^{(j-1)})^{\gamma}, (t^{\beta\alpha\pi}(\overline b))_{\pi \in G_j}, (\overline b^{(j+1)})^{\beta},\overline b^{(j+2)}, \ldots, \overline b^{(n)})\\
&= t'( (\overline b^{(1)}, \ldots, \overline b^{(j-1)})^{\gamma}, (t^{\alpha\pi\beta}(\overline b))_{\pi \in G_j}, (\overline b^{(j+1)})^{\beta},\overline b^{(j+2)}, \ldots, \overline b^{(n)})\\
&= t'( (\overline b^{(1)}, \ldots, \overline b^{(j-1)})^{\gamma}, (t(\overline b^{\beta}))_{\pi \in G_j}, (\overline b^{(j+1)})^{\beta},\overline b^{(j+2)}, \ldots,\ldots, \overline b^{(n)})  \label{eq:prod-regular3} \\
&= (t')^{\gamma}((\overline b^\beta)') = t'((\overline b^\beta)') = s(\overline b^{\beta})=s^\beta(\overline{b}).\label{eq:prod-regular4}
\end{align}

In the above, equation (\ref{eq:prod-regular3}) holds since $\beta \in \Phi_t$; the middle equation in (\ref{eq:prod-regular4}) holds because $t'$ is a $G$-term for $(\overline b^\beta)'\in M'$. This completes the proof that $\Phi_s \supseteq \Phi_t \cup \{ \id \}$, which contradicts the maximality of $t$.

Thus we showed that $T(j-1)$ implies $T(j)$. By induction on $j=0,1,\ldots,n$, we obtain $\algA \models T(n)$, i.e. there is a $G$-term on $F\cup M \cup \{\overline b \}$. This in turn finishes the proof of $\algA \models S(m)$. Downwards induction on $m = n,\ldots,1,0$ implies that $\algA \models S(0)$. 

In conclusion, we have proved that when $\algA$ is an algebra such that $\algA \models S(n)$, then $\algA \models S(0)$. This implies that $\Sigma_G$ has the $n$-local-global property.

\end{proof}

As an immediate corollary of Theorem \ref{theorem:prod-regular} we obtain:

\begin{corollary} \
\begin{itemize}
\item For every direct product of regular groups $G = G_1 \times \cdots \times G_n$, $\Decide(\Sigma_G)$ can be solved in time $\mathcal O(\arity(\algA) \| \algA \|^{n (|G| + \sum_{i=1}^n |G_i|)})$. 
\item For every cyclic loop condition $\Sigma_{\langle \pi \rangle}$, $\Decide(\Sigma_{\langle \pi \rangle})$ is in $\comP$.
\end{itemize}
\end{corollary}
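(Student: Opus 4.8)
Both items are obtained by feeding Theorem~\ref{theorem:prod-regular} into Lemma~\ref{lemma:local-global}. The first thing I would note is that each relevant $\Sigma_G$ is a \emph{strong, height-$1$} (hence linear) Maltsev condition: it consists of the finitely many equations $t\approx t^{\pi}$, $\pi\in G$, in the single operation symbol $t$ of arity $|G_1\cup\cdots\cup G_n|=\sum_{i=1}^{n}|G_i|$, and every such equation involves only minors of $t$ (with $t=t^{\id}$). So Lemma~\ref{lemma:local-global} is applicable to it. For the first item, Theorem~\ref{theorem:prod-regular} says $\Sigma_G$ has the $n$-local-global property, so Lemma~\ref{lemma:local-global} applies with $k=n$, with number of variables $\sum_{i=1}^{n}|G_i|$, and with at most $|G|$ distinct minors $t^{\pi}$ occurring; this yields a running time of $\mathcal{O}\bigl(\arity(\algA)\,\|\algA\|^{\,n(|G|+\sum_{i=1}^{n}|G_i|)}\bigr)$, which is exactly the claimed bound.

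\textbf{Reducing the second item to the first.} Fix a permutation $\pi$. If $\pi$ has a fixed point, then $\Sigma_{\langle\pi\rangle}$ is trivial by part~(1) of Lemma~\ref{lemma:G-terms}, so $\Decide(\Sigma_{\langle\pi\rangle})$ is solved by always answering ``Yes''. Otherwise I would invoke part~(2) of Theorem~\ref{theorem:loop-condition}: $\Sigma_{\langle\pi\rangle}$ is equivalent to a finite join $\Sigma_{\langle\rho_1\rangle}\lor\cdots\lor\Sigma_{\langle\rho_N\rangle}$ in which each $\rho_j$ has only orbits of pairwise distinct prime lengths. (Finiteness of this join is either explicit in \cite{cyclic-loop-conditions} or can be deduced from compactness of the strong Maltsev condition $\Sigma_{\langle\pi\rangle}$ in the interpretability lattice; this is the point I would be most careful about in writing up a full proof.)

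\textbf{Each join-and is covered by the first item.} Here is the key structural observation. Each $\langle\rho_j\rangle$, regarded as a permutation group on the union of its orbits, is a direct product of regular permutation groups. Indeed, if the orbits of $\rho_j$ have the distinct prime lengths $q_1<\cdots<q_s$, then $\rho_j$ restricted to its $i$-th orbit generates a regular copy of $\Z_{q_i}$, and since $q_1,\dots,q_s$ are pairwise coprime the restriction homomorphism $\langle\rho_j\rangle\to\Z_{q_1}\times\cdots\times\Z_{q_s}$ is an isomorphism of permutation groups by the Chinese Remainder Theorem: its image is cyclic of order $\operatorname{lcm}(q_1,\dots,q_s)=q_1\cdots q_s=|\Z_{q_1}\times\cdots\times\Z_{q_s}|$, and the product on the right carries precisely the natural action on the disjoint union of the orbits. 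Thus $\langle\rho_j\rangle$ is a direct product of $s$ regular groups, so the first item of the corollary (equivalently, Theorem~\ref{theorem:prod-regular} together with Lemma~\ref{lemma:local-global}) gives $\Decide(\Sigma_{\langle\rho_j\rangle})\in\comP$, with a fixed polynomial bound since $\rho_j$ is fixed.

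\textbf{Combining.} Finally, the join in the interpretability lattice corresponds to conjunction of conditions: using disjoint operation symbols for the individual $\Sigma_{\langle\rho_j\rangle}$, an algebra satisfies $\Sigma_{\langle\rho_1\rangle}\lor\cdots\lor\Sigma_{\langle\rho_N\rangle}$ if and only if it satisfies every $\Sigma_{\langle\rho_j\rangle}$. Hence the algorithm for $\Decide(\Sigma_{\langle\pi\rangle})$ runs the $N$ polynomial-time algorithms for the $\Decide(\Sigma_{\langle\rho_j\rangle})$ and answers ``Yes'' exactly when all of them do; since $N$ depends only on $\pi$, this is a polynomial-time procedure. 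I expect the main obstacle to be exactly the reduction through Theorem~\ref{theorem:loop-condition} in the last two steps --- pinning down the finiteness of the join and verifying that each join-and falls within the scope of Theorem~\ref{theorem:prod-regular} --- while everything else is routine bookkeeping with Lemma~\ref{lemma:local-global}.
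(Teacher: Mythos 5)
Your proof is correct and follows essentially the same route as the paper: the first item is Theorem~\ref{theorem:prod-regular} fed into Lemma~\ref{lemma:local-global}, and the second item uses Theorem~\ref{theorem:loop-condition} to reduce to cyclic loop conditions whose groups have orbits of distinct prime lengths, each of which is a direct product of regular prime-cyclic groups. The CRT verification and the remark on finiteness of the join (via compactness of strong Maltsev conditions) are details the paper leaves implicit, but they are the right justifications, not a different approach.
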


\begin{proof}
The result about regular groups $G$ follows immediately from Theorem \ref{theorem:prod-regular} and Lemma \ref{lemma:local-global}. Further, recall from Theorem \ref{theorem:loop-condition} that every cyclic loop condition $\Sigma_{\langle \pi \rangle}$ is equivalent to the join of cyclic loop conditions $\Sigma_{\langle \pi_1 \rangle}, \ldots, \Sigma_{\langle \pi_n \rangle}$, such that every $\pi_i$ has orbits of distinct prime length. Every group $\langle \pi_i \rangle$ is the direct product of prime cyclic groups, and therefore $\Decide(\Sigma_{\pi_i}) \in \comP$ for every $i = 1,\ldots,n$. This implies that also $\Decide(\Sigma_{\langle \pi \rangle}) \in \comP$.
\end{proof}

\section{Failure of local-global} \label{sect:failure} 

In this section we prove the failure of the local-global property for several $G$-term conditions:

\begin{theorem} \label{theorem:failureltg}
Let $G \leq \Sym(n)$ be a permutation group with no fixpoints and assume that there is a permutation $\alpha \in G$ such that $\alpha$ has exactly one fixpoint and all other orbits of $\alpha$ have the same size. Then for every $k \in \N$ there is an algebra $\algA_k$ such that:
\begin{itemize}
\item $\algA_k = (A_k, f_0,\ldots, f_k)$ with $(k+1)n<|A_k| < (k + 2) n$,
and $f_0,\ldots, f_k$ are $n$-ary,
\item $\algA_k$ is idempotent,
\item $\algA_k$ satisfies $\Sigma_{\Sym(n)}$ on every $F\subseteq A_k^n$ with $|F|=k$,
\item $\algA_k$ does not satisfy $\Sigma_G$.
\end{itemize}
\end{theorem}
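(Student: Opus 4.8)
The plan is to build, for each $k$, an algebra $\algA_k$ whose universe consists of roughly $k+2$ disjoint ``copies'' of $[n]$ (or of the orbit structure of $\alpha$), arranged so that small sets of tuples can be symmetrized using just one basic operation, but no single term can symmetrize everything at once because any attempt to merge the copies forces a conflict detected by the fixpoint of $\alpha$. Concretely, I would let $A_k$ be a set of size strictly between $(k+1)n$ and $(k+2)n$: think of $k+1$ ``full'' blocks each of size $n$ together with a partial block, or more precisely take $A_k$ to be $([k+1]\times[n])\cup P$ where $P\subsetneq[n]$ is chosen so the size condition holds. The basic operations $f_0,\dots,f_k$ are designed so that $f_i$ ``handles the $i$-th block'': on tuples living entirely inside block $i$ (under the natural identification with $[n]$), $f_i$ acts like a would-be symmetric operation collapsing that block to a point, while on everything else it acts as a projection (so that idempotency is preserved and no unwanted identifications occur).

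The key steps, in order, are: (1) Fix the combinatorial skeleton — decide exactly how $A_k$ decomposes, and record the bijections identifying each block with $[n]$; (2) Define $f_0,\dots,f_k$ explicitly as $n$-ary idempotent operations, each behaving like a collapse-to-constant map on ``its'' block and as $p_1^n$ elsewhere, being careful that the output of a collapse lands in a designated ``constant'' position so the operations are well-defined self-maps of $A_k$; (3) Prove the positive local statement: given any $F\subseteq A_k^n$ with $|F|=k$, the $k$ tuples of $F$ can meet at most $k$ of the $k+1$ full blocks, so there is a block $i$ untouched in a suitable sense — more realistically, one shows by a counting/pigeonhole argument that one can compose the $f_i$'s to build a term $t$ with $t\approx t^\pi$ on $F$ for all $\pi\in\Sym(n)$, using that each $f_i$ symmetrizes tuples confined to block $i$ and acts trivially otherwise, so a product $f_{i_k}\circ\cdots\circ f_{i_1}$ over the blocks actually hit by $F$ does the job; (4) Prove the negative global statement: suppose $t$ is a $G$-term on all of $A_k$; evaluate $t$ and its $\alpha$-shift on a cleverly chosen tuple $\overline a$ whose entries realize the orbit structure of $\alpha$ across the blocks (using that $\alpha$ has exactly one fixpoint and otherwise equal-sized orbits), and derive that $t$ would have to identify points it cannot, because the term operations of $\algA_k$ are too weak — formally, analyze $\Clo(\algA_k)$ enough to see that every term operation is, on a suitable tuple, essentially a composition of the collapses and projections, and such compositions cannot be $\alpha$-invariant on a tuple exhibiting one fixed coordinate and equal nontrivial orbits.

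The main obstacle I expect is step (4): controlling $\Clo(\algA_k)$ well enough to rule out \emph{all} terms, not just the obvious ones. The operations are idempotent and mostly projections, so term operations are compositions that, restricted to any single tuple, reduce to projections interleaved with a few ``collapse'' events; the heart of the argument is to show that for a well-chosen input tuple $\overline a$ the value $t(\overline a)$ is forced into one particular block and one particular coordinate within it, incompatibly with $\alpha$-invariance because $\alpha$ moves that coordinate (it is not the fixpoint) while some other coordinate with the same orbit length would demand a different value. I would set this up by choosing $\overline a$ so that its distinct entries come from distinct blocks in a pattern mirroring the cycle decomposition of $\alpha$, then track, by induction on term complexity, which block and coordinate $t(\overline a)$ lies in, showing the fixpoint of $\alpha$ is the unique ``stable'' coordinate and hence $t(\overline a)\neq t^\alpha(\overline a)$. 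The positive direction (step 3) is comparatively routine once the operations are defined correctly, reducing to a pigeonhole argument on which blocks the $k$ tuples of $F$ occupy; the size bound $(k+1)n<|A_k|<(k+2)n$ is exactly what makes $k$ blocks insufficient to cover $A_k$ while $k$ operations suffice for any $k$-element $F$, and for the final bullet one notes $\Sigma_{\Sym(n)}\le\Sigma_G$ fails in the right direction only for the global claim — here $G\le\Sym(n)$ gives $\Sigma_G\le\Sigma_{\Sym(n)}$, so satisfying $\Sigma_{\Sym(n)}$ locally is the stronger local hypothesis, while failing $\Sigma_G$ globally is the weaker global conclusion, which is precisely the asymmetry we want.
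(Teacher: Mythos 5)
Your high-level skeleton --- roughly $k+2$ ``blocks,'' with each $f_i$ responsible for one block, plus a small extra piece giving the size bound --- matches the paper, but the default behaviour you assign to the $f_i$'s is backwards, and this breaks the local positive claim. You propose that $f_i$ collapses tuples living entirely in block $i$ and acts as the projection $p_1^n$ everywhere else. A projection is not symmetric, so with this design no single basic operation is a $\Sym(n)$-term on a $k$-element set $F$ whose tuples straddle several blocks, and no composition helps either: on a tuple whose entries come from multiple blocks every $f_i$ is $p_1^n$, so every term operation restricted to such a tuple is essentially a projection, hence not symmetric. The paper does the opposite. Each $f_i$ is \emph{symmetric by default} (it returns a constant, or a sum modulo $p$ over the extra block, both of which are invariant under permuting the arguments), and is non-symmetric only on the small special set of tuples whose non-$\Z_p$ entries are confined to block $\{i\}\times[n]$ and are not constant. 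Then the pigeonhole is immediate: given $k$ tuples, at most $k$ of the $k+1$ operations can be ``triggered,'' so some $f_i$ is symmetric on all of $F$; a single basic operation suffices, with no composition needed. Getting this default right is the crux of step~(3), and your version as stated would fail.

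Second, your ``partial block $P\subsetneq[n]$'' carries no structure, whereas the extra block in the paper is $\Z_p$ (after replacing $\alpha$ by a power so that the common orbit length becomes a prime $p$), and the arithmetic is essential. On $\Z_p$ all the $f_i$ agree with $\sum_j x_j \bmod p$, so $\Z_p$ is a subuniverse on which the clone preserves affine subspaces; meanwhile the non-symmetric rule literally counts how far a tuple in block $i$ has been $\alpha$-shifted from a fixed transversal of the $\alpha$-orbits. The negative argument is then not a direct analysis of $\Clo(\algA_k)$ but an invariant-relation argument: one exhibits a relation $Q\subseteq A_k^I$ (an affine subspace $R$ of $\Z_p^I$ together with finitely many ``orbit-coding'' tuples $\overline{q(a)}$) that is preserved by $\algA_k$, shows that any $G$-term applied to $(\overline{q(1)},\ldots,\overline{q(n)})$ produces an element of $Q$ that must be constant on every $\Z_p$-block, hence lies outside $R$, hence equals $\overline{q(a)}$ for $a$ a fixpoint of $\alpha$, i.e.\ $a=1$. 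The contradiction then comes from a conjugation trick your sketch doesn't mention: since $G$ has no fixpoints, some $\pi\in G$ moves $1$, and $\alpha'=\pi\alpha\pi^{-1}\in G$ has a different fixpoint $\pi(1)$ with the same orbit structure; repeating the argument with $\alpha'$ forces the value to also equal $\overline{q(\pi(1))}$, a contradiction. Your plan to ``track, by induction on term complexity, which block and coordinate $t(\overline a)$ lies in'' is exactly the kind of ad hoc clone analysis the invariant-relation method is designed to avoid, and it is not clear it could be carried out for your operations, since after one collapse the value lands outside every block and subsequent compositions are no longer just projections.
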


\begin{proof}
Note that $n>1$ since $\Sym(1)$ has a fixpoint.
Let $N = \{1,\ldots,n\}$, and let us regard $G$ as a permutation group on $N$. Without loss of generality, let $1$ be the fixpoint of $\alpha$. We can further assume that the length of all other orbits of $\alpha$ is a prime $p$; if not, we take a suitable power of $\alpha$ instead. Note that $n \equiv 1 \bmod p$.

Let us now consider the component-wise action of $\langle \alpha \rangle$ on tuples from $N^\ell$ for $\ell\in \N$. The only fixpoint of this action is $(1,1,\ldots,1)$; all other orbits have size $p$. In particular all orbits of non-constant tuples in $N^\ell$ have size $p$. Let $T_\ell$ be a transversal set of these orbits and let $T = \bigcup_{\ell \in \N} T_\ell$. Then every tuple $\overline x \in \bigcup_{\ell \in \N} N^\ell$ has a unique representation as $\overline x = \alpha^i (\overline t)$, with $\overline t \in T$ and $0 \leq i \leq p-1$.

We define the universe of $\algA_k$ to be the disjoint union of $\{0,1,\ldots,k\} \times N$ and $\Z_p$. For any tuple $\overline x$ over $\algA_k$, let $\overline x_N$ denote the tuple consisting only of its entries from $\{0,1,\ldots,k\} \times N$.

We then define the operations $f_i$ for $i=0,1,\ldots,k$ by the following rules:

\begin{enumerate}[(1)]
\item\label{itm:allZp} $f_i(x_1,\ldots,x_n) = \sum_{j=1}^n x_j$ if $x_j \in \Z_p$ for all $j = 1,\ldots,n$ (addition is modulo $p$ here),
\item\label{itm:constant} $f_i(x_1,\ldots,x_n) = c$ if $\overline x_N = (c,c,\ldots,c)$ for some $c\in A_k$,
\item\label{itm:orbit} $f_i(x_1,\ldots,x_n) = j \in \Z_p$, if $\overline x_N = ((i,a_1),\ldots,(i,a_m))$ and $(a_1,\ldots,a_m) = \alpha^j (\overline t)$ for some $\overline t \in T_m$,
\item\label{itm:else} $f_i(x_1,\ldots,x_n) = 0 \in \Z_p$ else.
\end{enumerate}

Each operation $f_i$ is idempotent by~(\ref{itm:allZp}) and (\ref{itm:constant}). Further note that $f_i$ is symmetric on all tuples from $A_k$ except those satisfying the hypothesis of rule~(\ref{itm:orbit}). When rule~(\ref{itm:orbit}) applies,  $\overline x_N$ only contains tuples from $\{i\} \times N$, and $f_i(\overline x)$ counts the number of times one needs to apply $\alpha$ to reach $\overline x_N$ from the transversal set $T_m$. 

We first show that $\algA_k$ has a symmetric term on every family of $k$ tuples. Choose any $\overline a_1, \ldots, \overline a_k \in A_k^n$. Each tuple $\overline a_j$ can satisfy the hypothesis of rule~(\ref{itm:orbit}) for at most one basic operation. Therefore we are left with at least one $i \in \{0,1,\ldots,k\}$ such that no tuple $\overline a_j$ satisfies condition~(\ref{itm:orbit}) with respect to $f_i$. This operation $f_i$ is a symmetric term on $\{\overline a_1,\dots,\overline a_k\}$.

It only remains to prove that $\algA_k$ has no (global) $G$-terms. Observe first that $\Z_p$ is a subuniverse of $\algA_k$. When restricted to $\Z_p$, all the operations $f_1,\dots,f_k$ are equal to $f_j(x_1,\ldots,x_n) = \sum_{i=1}^n x_i$ which, as an idempotent linear map, preserves all affine subspaces of powers of $(\Z_p,+)$. In particular, the operations of $\algA_k$ preserve the relation $R \subseteq \Z_p^I$, with $I = \{0,\ldots,k\} \times \Z_p$, which is defined by the linear identities
\[
(y_{i,j})_{(i,j) \in I} \in R \Leftrightarrow \forall j \in \Z_p \colon \sum_{\ell=0}^{k} y_{\ell,j+1} =1+ \sum_{\ell=0}^{k} y_{\ell,j}.
\]
We use the non-standard labelling of the coordinates of $R$ by $I$ (instead of natural numbers), in order to simplify the presentation of our proof. Note that $R$ is not empty, but there is no $\overline y \in R$ such that $y_{\ell,0} = y_{\ell,1} = \cdots = y_{\ell,p-1}$ for every $\ell = 0,\ldots, k$.

Next, let us define the tuple $\overline {q(a)} \in A^I$ for each $a \in N$ by $q(a)_{i,j} = (i,\alpha^{j}(a))$ for all $i \in \{0,1,\ldots,k\}$, $j \in \Z_p$. 
A key observation we will need later is that $\overline{q(\alpha(a))}$ is $\overline{q(a)}$ where the indices in each of the coordinate blocks $\{i\}\times \Z_p$ have been cyclically shifted. To be more specific,  $\overline{q(\alpha(a))}_{i,j}  = (i,\alpha^{j+1}(a)) = \overline{q(a)}_{i,j+1}$, for all $(i,j) \in I$. 

Let us define the relation $Q = \{ \overline{q(a)} \colon a \in N \} \cup R \subseteq A_k^{I}$. We shall show that $Q$ is invariant under $\algA_k$. We will prove that $Q$ is preserved by the basic operation $f_0$; the argument for the other operations $f_i$ is analogous. Let $\overline q_1,\ldots, \overline q_n \in Q$; we need to show $f_0(\overline q_1,\ldots, \overline q_n) \in Q$. We will consider several cases.

If all $\overline q_1,\ldots, \overline q_n$ are elements of $R$, then also $f_0(\overline q_1,\ldots, \overline q_n) = \sum_{i=1}^n \overline q_i \in R$, since $R$ is an affine subspace of $\Z_p^{I}$.

In the remaining cases $f_0(\overline q_1,\ldots, \overline q_n)$ only depends on the tuples that are not in $R$. Without loss of generality let these be the first $r$ tuples $\overline{q(a_1)},\ldots, \overline{q(a_r)}$. If all of these tuples are equal to the same $\overline{q(a)}$, we get $f_0(\overline q_1,\ldots, \overline q_n) = \overline{ q(a)} \in Q$ by rule~(\ref{itm:constant}), so let us assume that these tuples are not all the same.

Denote by $M \in A^{I \times r}$ the matrix with columns $\overline{q{(a_1)}},\ldots, \overline{q{(a_r)}}$ \footnote{So the rows of $M$ are indexed by $I$, \emph{not} natural numbers}. The
``$(i,j)$-th'' row of the matrix $M$ is of the form
$((i,\alpha^j({a_1})),(i,\alpha^j({a_2})),\ldots,(i,\alpha^j({a_r}))$.
Let $\overline t \in T_r$ and $\ell \in \Z_p$ be such that $(a_1,\ldots,a_r) = \alpha^\ell(\overline t)$. Then it follows from rule~(\ref{itm:orbit}) that 
$f_0(\overline q_1,\ldots, \overline q_n)_{(0,j)} = \ell+j$. By rule~(\ref{itm:else}),  $f_0(\overline q_1,\ldots, \overline q_n)_{(i,j)} = 0$ for $i \neq 0$. Therefore $f_0(\overline q_1,\ldots, \overline q_n) \in R \subseteq Q$. 
Thus $Q$ is preserved by $\algA_k$.

Suppose now for a contradiction that there is a (global) $G$-term $t \in \Clo(\algA_k)$. In particular $t$ needs to satisfy $t \approx t^{\alpha}$, and therefore
\begin{align*}
t(\overline{q (1)}, \overline{ q(2)},\ldots, \overline{q(n)}) &=
t^\alpha(\overline{q (1)}, \overline{ q(2)},\ldots, \overline{q(n)})\\ 
&=
t(\overline{q(\alpha(1))}, \overline{q(\alpha(2))},\ldots, \overline{ q(\alpha(n))})
\end{align*}
Denote $t(\overline{q (1)}, \overline{ q(2)},\ldots, \overline{q(n)})$ by $\overline r$. Since $Q$ is preserved by $t$, we have $\overline r\in Q$.

Let $M$ be the matrix with columns $\overline{q(1)}$, $\overline{q(2)}$, \dots, $\overline{q(n)}$ (in this order) and let $M'$ be the matrix with columns $\overline{q(\alpha(1))}$, $\overline{q(\alpha(2))}$, \dots, $\overline{q(\alpha(n))}$. Recall that $\overline{q(\alpha(j))}$ is $\overline{q(j)}$ where for each $i$ the coordinates $\{i\}\times \Z_p$ are cyclically shifted. 

From this it follows that the $(i,j)$-th row of $M'$ is the $(i,j+1)$-th row of $M$.
By the identity $\overline r=t(M)=t(M')$, we obtain $r_{i,j}=r_{i,j+1}$ for each applicable $i,j$, so $r_{i,0} = r_{i,1} = \cdots = r_{i,p-1}$ for every $i = 0,\ldots, k$. Therefore $\overline r \notin R$.

As noted above, $\overline r\in Q$. Since $\overline r\not\in R$, we must have $\overline r=\overline{q(a)}$ for some $a\in N$. Since the value of $(i,\alpha^j(a))$ must not depend on $j$, $a$ needs to be a fixpoint of $\alpha$, i.e., $\overline r=\overline{q(1)}$.

However, since $G$ has no fixpoints, there is a permutation $\pi\in G$ that sends 1 to some $\pi(1)\neq 1$.  Then $\alpha' = \pi  \alpha  \pi^{-1} \in G$ has $\pi(1)$ as a fixpoint; all the other orbits of $\alpha'$ have the same size $p$. Applying the same argument to $\alpha'$ instead of $\alpha$, we arrive at $\overline r=\overline{q(\pi(1))}$, which is in contradiction to $\pi(1) \neq 1$. This finishes our proof.
\end{proof}

As a direct corollary we obtain failure of the local-global property for several conditions:

\begin{corollary} \label{corollary:failure}
The Maltsev condition $\Sigma_G$ does not have the local-global property for idempotent algebras for:
\begin{enumerate}
\item the symmetric group $G = \Sym(n)$ for any $n \geq 3$,
\item\label{itm:dihedral} the dihedral groups $G = D_n \leq \Sym(n)$ for all odd $n>1$,
\item the alternating groups $G = A_n \leq \Sym(n)$ for all $n \geq 4$, and
\item the block-symmetric group $G=  \Sym(n) \times \Sym(n+1)$ for every $n \geq 2$.
\end{enumerate}
\end{corollary}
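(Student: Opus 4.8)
The plan is to deduce all four parts uniformly from Theorem~\ref{theorem:failureltg}. First I would record the general reduction: suppose a permutation group $G$, which we take to act on a set identified with $[n]$, has no fixpoint and contains a permutation $\alpha$ with exactly one fixpoint all of whose remaining orbits have the same size. Then for each $k \in \N$, Theorem~\ref{theorem:failureltg} gives an idempotent algebra $\algA_k$ with $|A_k|^n \geq |A_k| > (k+1)n > k$ that satisfies $\Sigma_{\Sym(n)}$ on every $k$-element $F \subseteq A_k^n$ but has no global $G$-term. Since $G \leq \Sym(n)$, a term that is symmetric on a set $F$ is in particular a $G$-term on $F$ (the local version of Lemma~\ref{lemma:G-terms}(\ref{itm:subgroup})), so $\algA_k$ satisfies $\Sigma_G$ on every $k$-element $F \subseteq A_k^n$ while $\algA_k \not\models \Sigma_G$. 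As $k$ is arbitrary, $\Sigma_G$ does not have the $k$-local-global property for idempotent algebras for any $k$, so it fails the local-global property for idempotent algebras. Thus it only remains to exhibit, in each of the four cases, a suitable $\alpha$; fixpoint-freeness of $G$ is clear in each case, since all the groups in question act transitively on their domain (or, for the block-symmetric group, transitively on each of its two blocks, both of size $\geq 2$).

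For $G = \Sym(n)$ with $n \geq 3$ I would take $\alpha$ to be the $(n-1)$-cycle $(2\,3\,\cdots\,n)$: it fixes only $1$ and has a single further orbit. For a dihedral group $G = D_n \leq \Sym(n)$ with $n$ odd, $n > 1$, regarded as the symmetry group of a regular $n$-gon on its vertex set $[n]$, I would take $\alpha$ to be any reflection: as $n$ is odd its axis passes through exactly one vertex, so $\alpha$ fixes that vertex and splits the remaining $n-1$ vertices into $(n-1)/2$ transpositions.

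For the alternating group $G = A_n$ with $n \geq 4$ one has to keep $\alpha$ even. If $n$ is even, the $(n-1)$-cycle $(2\,3\,\cdots\,n)$ has sign $(-1)^{n-2}=1$ and works as before. If $n$ is odd (hence $n \geq 5$), I would instead take $\alpha$ to be a product of two disjoint cycles of length $(n-1)/2$ on $\{2,\ldots,n\}$ fixing $1$; its sign is $\bigl((-1)^{(n-1)/2-1}\bigr)^2 = 1$, it fixes only $1$, and its two remaining orbits both have size $(n-1)/2 \geq 2$. Finally, for $G = \Sym(n)\times\Sym(n+1) \leq \Sym([n]\dot\cup[n+1])$ with $n \geq 2$ I would put the unique fixpoint into the $(n+1)$-block: take $\sigma = (1\,2\,\cdots\,n) \in \Sym(n)$ and $\tau \in \Sym(n+1)$ fixing one point of $[n+1]$ and acting as an $n$-cycle on the other $n$ points; then $\alpha = (\sigma,\tau)$ has exactly one fixpoint and two further orbits, both of size $n \geq 2$. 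In each case Theorem~\ref{theorem:failureltg} applies and the corollary follows.

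The argument is essentially bookkeeping; the only genuine subtleties are the parity constraint in the $A_n$ case, which forces the split on the parity of $n$, and, for the block-symmetric group, the choice to locate the single fixpoint in the larger block $[n+1]$ so that the common orbit length $n$ automatically divides both $n$ and $(n+1)-1$ — placing it in $[n]$ instead would require an $\ell \geq 2$ with $\ell \mid \gcd(n-1,n+1) \mid 2$, which fails for even $n$.
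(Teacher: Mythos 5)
Your proof is correct and follows essentially the same route as the paper: reduce the corollary to Theorem~\ref{theorem:failureltg} by observing that $\Sym(n)$-terms on $F$ are in particular $G$-terms on $F$ (since $G\leq\Sym(n)$), then exhibit for each $G$ a permutation $\alpha$ with a unique fixpoint and equisized remaining orbits. Your choices of $\alpha$ coincide with the paper's up to the inessential decision of which point to fix (the paper's $A_{2k+1}$ witness is $(1,\ldots,k)(k+1,\ldots,2k)$ fixing $2k+1$, and its block-symmetric witness is $(1,\ldots,n)(n+1,\ldots,2n)$), and your parity check for $A_n$ and divisibility remark for the block-symmetric case are correct supporting observations.
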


\begin{proof}
It is easy to see that all of the permutation groups in the list have no fixpoints. Therefore we only need to find a permutation $\alpha$ satisfying the properties in Theorem \ref{theorem:failureltg}.
\begin{enumerate}
\item For $G = \Sym(n)$, we set $\alpha = (23\cdots n)$
\item For $G = D_n$, let $\alpha = (2,3)(4,5), \cdots (n-1, n)$
\item For $G = A_n \leq \Sym(n)$. If $n$ is even, we set $\alpha = (23\cdots n)$; if $n = 2k +1$, we set $\alpha = (1,\ldots,k)(k+1,\ldots,2k)$.
\item For $G= \Sym(n) \times \Sym(n+1)$ (with $\Sym(n)$ acting on $[n]$ and $\Sym(n+1)$ acting on $\{n+1,n+2,\dots, 2n+1\}$), we let $\alpha = (1,\ldots,n)(n+1,\ldots 2n)$.
\end{enumerate}
\end{proof}

\section{The local-global property for oligomorphic algebras}  \label{sect:oligomorphic}
For algebras with infinite universe it is in general not true that a Maltsev condition $\Sigma$ is satisfied if it is satisfied on all subsets of finite size even if $\Sigma$ has the local-global property in the sense of Definition \ref{definition:local-global}. A counterexample can be found in \cite{kazda-decidingQWNU}, where it is shown that there is a countable idempotent algebra that has quasi WNU terms on all finite subsets, but not on the full universe.

However, there are additional ``finiteness'' conditions on an algebra or clone that sometimes allow us to lift local properties to the full universe. A clone $\mathcal C \subseteq \bigcup_{n \in \N}A^{A^n}$ on a countablly infinite set $A$ is called \emph{oligomorphic} if the group of its unary invertible elements $\mathcal C^{inv} = \{ f \in \mathcal C \colon f \colon A \to A $ is bijective $ \}$ is an \emph{oligomorphic} permutation group, meaning that the action of $\mathcal C^{inv}$ on finite powers $A^n$ has only finitely many orbits for every $n \in \N$.

Clones on an infinite set $A$ come with a natural topology, the \emph{topology of pointwise convergence}, in which a series of operations has a limit $(f_n)_{n \in \N} \to f$, if and only if all $f_n$ and $f$ have the same arity $k$, and for every finite subset $F \subseteq A^k$: $f|_F = f_n|_F$ for all big enough $n$. Oligomorphic clones that are closed with respect to the topology of pointwise convergence play a central role in the study of infinite domain constraint satisfaction problems as they are the polymorphism clones of $\omega$-categorical structures; we refer to \cite{Bodirsky-book} for further background.

We are going to show that for any condition $\Sigma_G$ which has the $k$-local-global property (in the sense of Definition \ref{definition:local-global}) a closed oligomorphic clone $\mathcal C$ satisfies $\Sigma_G$ if and only if $\mathcal C$ satisfies $\Sigma_{G}$ on every subset $F \in A^n$ with $|F|=k$.

\begin{theorem} \label{theorem:oligomorphic}
Let $G \leq \Sym(n)$ be a finite permutation group such that $\Sigma_G$ has the $k$-local-global property, and let $\mathcal C$ be a closed oligomorphic clone on a countable set $A$. Then $\mathcal C \models \Sigma_G$ if and only if $\mathcal C$ satisfies $\Sigma_{G}$ on every subset $F \in A^n$ with $|F|=k$.
\end{theorem}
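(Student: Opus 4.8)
The plan is to reduce the statement to the finite local-global property via a compactness argument that exploits oligomorphicity and closedness of $\mathcal C$. One direction is trivial: if $\mathcal C \models \Sigma_G$, then a $G$-term witnessing this satisfies $\Sigma_G$ on every finite subset of $A^n$, in particular on every $F$ with $|F|=k$. So assume $\mathcal C$ satisfies $\Sigma_G$ locally on all $k$-element subsets $F \subseteq A^n$; we must produce a global $G$-term in $\mathcal C$.

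The first step is to enumerate $A^n$ as $\{\overline a_1, \overline a_2, \ldots\}$ and set $F_N = \{\overline a_1, \ldots, \overline a_N\}$. Since $\Sigma_G$ has the $k$-local-global property for finite algebras, and since the finite subalgebra-like structure we need here is not literally finite, I would instead argue as follows: for each finite $F \subseteq A^n$, the fact that $\mathcal C$ satisfies $\Sigma_G$ on all $k$-subsets of $F$ should let us apply the local-global property to conclude $\mathcal C$ satisfies $\Sigma_G$ on $F$ itself. The subtlety is that Definition~\ref{definition:local-global} is phrased for algebras, not clones; but a clone $\mathcal C$ is the clone of term operations of the algebra $(A, \mathcal C)$, so ``$\mathcal C$ satisfies $\Sigma_G$ on $F$'' means exactly ``the algebra $(A,\mathcal C)$ satisfies $\Sigma_G$ on $F$,'' and the $k$-local-global property applies verbatim (taking care of the cardinality hypothesis $|A|^n \geq k$, which holds since $A$ is infinite). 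Thus for every finite $F \subseteq A^n$ there is a term operation $t_F \in \mathcal C$ that is a $G$-term on $F$.

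The second, crucial step is to pass from this family $(t_F)_F$ of local $G$-terms to a single global one, using oligomorphicity. Here is where the finiteness of $G$ and the oligomorphic hypothesis enter. The operations $t_F$ may have wildly different behavior, but what matters for $G$-invariance on $\overline a = (a_1,\ldots,a_n)$ is only the single value $t_F(a_1,\ldots,a_n)$ together with its values under the $|G|$ permuted inputs $\overline a^{\pi}$, $\pi \in G$; equivalently, $t_F$ being a $G$-term on $\overline a$ is a statement about $t_F$ restricted to the finite set $\{\overline a^{\pi} : \pi \in G\} \subseteq A^n$. I would exploit that $\mathcal C^{inv}$ acts on $A^n$ with finitely many orbits: pick orbit representatives $\overline b_1, \ldots, \overline b_r$ for the action of $\mathcal C^{inv}$ on $A^n$. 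For each $\overline b_s$, choose (using the previous step) a term $t_s \in \mathcal C$ that is a $G$-term on the finite set $\{\overline b_s^{\pi} : \pi \in G\}$. Now for an arbitrary $\overline a \in A^n$, write $\overline a = g(\overline b_s)$ componentwise for some $g \in \mathcal C^{inv}$ and index $s$; since $g$ commutes with the minor operation $(\cdot)^\pi$ (it is applied componentwise and the $\pi$ permutes components), one checks $t_s(\overline a^\pi) = g^{-1}$... — more care is needed because $g \in \mathcal C^{inv}$ acts on $A$, not on $A^n$ directly. The clean way: let $g \in \mathcal C^{inv}$ with $g(\overline b_s) = \overline a$ meaning $g(b_{s,i}) = a_i$ for all $i$. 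Then $t_s(\overline a^\pi) = t_s(g \circ \overline b_s^{\pi}) $ where $g$ is applied to each coordinate; but $t_s \circ g$ need not equal $g \circ t_s$. So this naive orbit argument fails, and the real argument must instead use closedness directly.

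The correct compactness argument, and the step I expect to be the main obstacle, is this: using the previous step, for each $N$ fix $t_N \in \mathcal C$ that is a $G$-term on $F_N = \{\overline a_1,\ldots,\overline a_N\}$. The sequence $(t_N)$ need not converge, but I want a subsequence converging pointwise to some $t \in \mathcal C$ (using that $\mathcal C$ is closed). Pointwise convergence on all of $A^n$ requires, for each fixed finite $F$, that the restrictions $t_N|_F$ stabilize — which is not automatic since $A$ is infinite and there are infinitely many possible restrictions. This is exactly where oligomorphicity is needed: the key lemma I would prove is that we may replace each $t_N$ by $g_N \circ t_N$ for suitable $g_N \in \mathcal C^{inv}$ so that the resulting operations have only finitely many possible restrictions to each finite set, forcing a convergent subsequence by König's lemma / a diagonal argument — and that post-composing with $g_N \in \mathcal C^{inv}$ preserves being a $G$-term (it does: $(g \circ t)^\pi = g \circ t^\pi$, so $G$-invariance of $t$ on $F$ transfers to $g\circ t$). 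More precisely, since $\mathcal C^{inv}$ is oligomorphic, for any finite $B \subseteq A$ the set $\{h|_B : h \in \mathcal C^{inv}\}$ is finite, and one arranges the $g_N$ so the values $g_N(t_N(\overline a))$ for $\overline a$ ranging over a fixed finite set land in finitely many configurations. Combining a diagonal subsequence extraction over the exhaustion $F_1 \subseteq F_2 \subseteq \cdots$ of $A^n$ with closedness of $\mathcal C$ yields a limit operation $t \in \mathcal C$ which is a $G$-term on every $F_N$, hence on all of $A^n$, i.e. $\mathcal C \models \Sigma_G$. Verifying that the $g_N$ can be chosen coherently — so that the diagonalization actually produces a genuine pointwise limit rather than merely a subnet — is the delicate point, and is where the hypotheses ``oligomorphic'' and ``closed'' both get used essentially.
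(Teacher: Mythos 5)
Your overall strategy matches the paper's: reduce to local $G$-terms on each finite $F_N$ via the $k$-local-global property, then normalize by post-composing with elements of $\mathcal C^{inv}$ so that oligomorphicity gives finitely many possibilities, apply K\"onig's lemma to get a coherent sequence, and use closedness of $\mathcal C$ to extract a limit that is a global $G$-term. The observation that post-composition by $g\in\mathcal C^{inv}$ preserves being a $G$-term is correct and is exactly what the paper exploits.

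However, the specific lemma you invoke to get finiteness is false as stated. You write that ``for any finite $B\subseteq A$ the set $\{h|_B:h\in\mathcal C^{inv}\}$ is finite,'' but this fails even for $\mathcal C^{inv}=\Sym(\mathbb N)$ and $B$ a singleton: a bijection can send $B$ anywhere, so there are infinitely many restrictions. What oligomorphicity actually gives is that $\mathcal C^{inv}$ has finitely many orbits on $A^m$ for each $m$. The correct formulation is: for each $N$, the tuple of values $\bigl(t_N(\overline a_1),\ldots,t_N(\overline a_i)\bigr)\in A^i$ falls into one of finitely many $\mathcal C^{inv}$-orbits, so by choosing $g_N$ to move that tuple to a fixed orbit representative, the normalized restrictions $g_N\circ t_N|_{F_i}$ take only finitely many values. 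You reach the correct conclusion (``finitely many configurations'') but your stated justification does not yield it.

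The other point you explicitly flag as ``the delicate point'' --- whether the $g_N$ can be chosen coherently so the diagonal extraction produces a genuine pointwise limit --- is where the paper's precise construction earns its keep. The paper defines a preorder on pairs $(i,f)$ with $f$ a $G$-term on $F_i$, by $(i,f)\leq(j,g)$ iff $i\leq j$ and $u\circ f|_{F_i}=g|_{F_i}$ for some $u\in\mathcal C^{inv}$, observes that the quotient by the induced equivalence is a tree with finitely many vertices per level (by the orbit-count argument above), and applies K\"onig's lemma to the tree to get an infinite branch. That branch gives the sequence $f_1,f_2,\ldots$ with $u_i\circ f_{i+1}|_{F_i}=f_i|_{F_i}$, and replacing $f_{i+1}$ by $u_i^{-1}\circ f_{i+1}$ inductively makes the sequence genuinely stabilize on each $F_i$, so the pointwise limit exists and lies in $\mathcal C$ by closedness. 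This is the coherence argument you hand-waved; filling it in requires essentially the paper's tree construction (or an equivalent inverse-limit argument), not just a diagonal subsequence extraction.
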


\begin{proof}
Let $\mathcal C$ be a closed oligomorphic clone that satisfies $\Sigma_{G}$ on every subset $F \subseteq A^n$ with $|F|=k$. Since $\Sigma_G$ has the $k$-local-global property, we know that for \emph{every} finite $F \subseteq A^n$, there is an operation $f_F(x_1,\ldots,x_n) \in \mathcal C$, such that $f_F$ is a $G$-term on $F$.

Let $\overline a_1,\overline a_2, \ldots$ be an enumeration of $A^n$. We then define a pre-order on all pairs $(i ,f)$, such that $i \in \N$ and $f$ is a $G$-term on $F_i = \{\overline a_1,\overline a_2,\ldots, \overline a_i\}$ by setting $(i,f) \leq (j,g)$ if $i \leq j$ and there is a $u \in \mathcal C^{inv}$, such that $u \circ f |_{F_i} = g |_{F_i}$. Let $\sim$ be the equivalence given by the preorder $\leq$. Since $u$ is invertible, $(i,c)$ and $(j,d)$ are $\sim$-equivalent if and only if $(i,c)\leq (j,d)$ and $i=j$.

Since $C^{inv}$ is oligomorphic, for a fixed $i$ there are only finitely many $i$-elements sets $\{ b_1,\dots, b_i\} \subseteq A$ modulo $C^{inv}$. In particular, this implies for a fixed $i$ there are only finitely many equivalence classes $[(i,c)]_\sim$. This implies that the graph of $\leq$ modulo $\sim$ is a tree. For $i\in \N$ the number of vertices  $[(i,c)]_\sim$ on the $i$-th level of this tree is finite (and positive). In other words, we have an infinite finitely branching tree, which by K\"onig's lemma has an infinite branch.

Thus there is a sequence $f_1, f_2, f_3, \ldots$ of operations in $\mathcal C$, such that for every $i \in \N$ $f_i$ is a $G$-term on $F_i$ and there is a $u_i \in \mathcal C^{inv}$ such that $u_i \circ f_{i+1} |_{F_i} = f_i |_{F_i}$. Without loss of generality we can assume that $u_i = id$ otherwise we substitute $f_{i+1}$ with $u_i^{-1} f_{i+1}$, which is still a $G$-term on $F_{i+1}$.

Since the sequence $f_1, f_2, f_3, \ldots$ is eventually constant on every finite subset $F \subseteq A^n$, it has a limit $f$ with respect to the topology of pointwise convergence. 
As the limit of all $f_i$, the operation $f$ is a $G$-term on $A^n$. Further $f \in \mathcal C$, since $\mathcal C$ is a closed clone. This finishes the proof. 
\end{proof}

We remark that our proof of Theorem \ref{theorem:oligomorphic} can be adapted to other height 1 Maltsev conditions that have the local-global property. Similar arguments using K\"onig's lemma are sometimes also referred to as \emph{(standard) compactness argument} in the literature, a prominent example is the `Lift Lemma' in Chapter 10 of \cite{Bodirsky-book}.

\section{Discussion and open problems} \label{sect:openproblems}
Our results in Theorem \ref{theorem:prod-regular} and Theorem \ref{theorem:failureltg} cover Maltsev conditions $\Sigma_G$ for a quite large class of permutation groups $G$, but we are far from a full classification. One of the smallest groups which is not covered by our results is the permutation group $G = \langle (1,2,3)(4,5,6), (1,4)(2,5) \rangle \leq \Sym(6)$. This $G$ is isomorphic to $A_4$ as an abstract group, but it is its action on $\{1,2,\dots,6\}$ that matters here.

In order to understand the local-global property, and the complexity of $\Decide(\Sigma_{G})$ for all $G$, it would be instrumental to classify all conditions $\Sigma_G$ up to interpretability first. This classification problem is interesting independently of deciding $G$-terms in particular, as the class of all $\Sigma_{G}$'s includes several well-known Maltsev conditions.

Although we showed that $n$-ary symmetric terms do not have the local-global property, we still do not know the complexity of deciding them in finite (idempotent) algebras. Thus it is natural to ask:

\begin{question}
How hard is $\Decide(\Sigma_{\Sym(n)})$ and $\Decide^{id}(\Sigma_{\Sym(n)})$ for a fixed $n > 2$?
\end{question}

Note that Theorem \ref{theorem:loop-condition} does not exclude the possibility of another Maltsev condition that is equivalent to the existence of symmetric terms, and which has the local-global property.  It makes sense to ask in general,  whether the local-global property depends on the way a Maltsev condition is presented:

\begin{question} \label{question:equivalence}
Are there (linear strong) Maltsev conditions $\Sigma_1$, $\Sigma_2$, such that $\Sigma_1$ and $\Sigma_2$ are equivalent over all (finite) algebras, but $\Sigma_1$ has the local-global property and $\Sigma_2$ does not?
\end{question}

Note that (unlike in several previous results) the assumption of idempotence did not play a role in proving Theorem \ref{theorem:prod-regular} and Theorem \ref{theorem:failureltg}. Similar observations can be made about the proof of the local-global property for qWNU terms in \cite{kazda-decidingQWNU}. This might be connected to the fact, that our paper and \cite{kazda-decidingQWNU} study Maltsev conditions of height 1, while most previous results were about Maltsev conditions, that are linear but not of height 1. Therefore we conclude with the following questions:

\begin{question}
Let $\Sigma$ be a strong Maltsev condition of height 1. \begin{enumerate}
    \item Does $\Sigma$ have the local-global property if and only if $\Sigma$ has the local-global property for idempotent algebras?
    \item Does $\Decide(\Sigma)$ have the same complexity as $\Decide^{id}(\Sigma)$?
\end{enumerate}
\end{question}

\section{Acknowledgements}
We thank Matt Valeriote for suggesting the question of studying the conditions $\Sigma_G$. Moreover, we would like to thank Attila F\"oldv\'ari for answering some questions about permutation groups and providing the example of the smallest group not covered by our results.

\begin{Backmatter}
\bibliographystyle{apalike}
\bibliography{local-global}
\printaddress
\end{Backmatter}

\end{document}